\newtheorem{thm}{Theorem}[section]
\newtheorem{lem}[thm]{Lemma}
\newtheorem{prob}[thm]{Problem}
\newtheorem{prop}[thm]{Proposition}
\newtheorem{defn}[thm]{Definition}
\numberwithin{equation}{section}
\numberwithin{equation}{subsection}
\begin{document}

\title{Total dominator total chromatic numbers of\\cycles and paths}

\author{$^{1}$Adel P. Kazemi and $^{2}$Farshad Kazemnejad
 \\[1em]
$^{1}$ Department of Mathematics, \\ University of Mohaghegh Ardabili, Ardabil, Iran\\
ORCID: 0000-0002-6957-733X\\
Email: adelpkazemi@yahoo.com\\
[1em]
$^{2}$ Department of Mathematics, School of Sciences\\ Ilam University, Ilam, Iran\\
Email:  kazemnejad.farshad@gmail.com\\
}

\maketitle

\begin{abstract}
The total dominator total coloring of a graph is a total coloring of the graph such that each object (vertex or edge) of the graph is adjacent or incident to every object of some color class. The minimum namber of the color classes of a total dominator total coloring of a graph is called the total dominator total chromatic number of the graph. In \cite{KKM2019}, the authors initiated to study the total dominator total coloring of a graph and found some useful results, and presented some problems. Finding the total dominator total chromatic numbers of cycles and paths were two of them which we consider them here.
\\[0.2em]

\noindent
Keywords: Total dominator total coloring, Total dominator total chromatic number, total domination number, total mixed domination number, total graph.
\\[0.2em]

\noindent
MSC(2010): 05C15, 05C69.
\end{abstract}

%------------------------------------------------------------------------------------%
\pagestyle{myheadings}
\markboth{\centerline {\scriptsize A. P. Kazemi and F. Kazemnejad}}     {\centerline {\scriptsize A. P. Kazemi and F. Kazemnejad,~~~~~~~~~~~~~~~~~~~~~~Total dominator total chromatic numbers of cycles and paths}}

%------------------------------------------------------------------------------------

\section{\bf Introduction}

All graphs considered here are non-empty, finite, undirected and simple. For
standard graph theory terminology not given here we refer to \cite{West}. Let $%
G=(V,E) $ be a graph with the \emph{vertex set} $V$ of \emph{order}
$n(G)$ and the \emph{edge set} $E$ of \emph{size} $m(G)$. The
\emph{open neighborhood} and the \emph{closed neighborhood} of a
vertex $v\in V$ are $N_{G}(v)=\{u\in V\ |\ uv\in E\}$ and
$N_{G}[v]=N_{G}(v)\cup \{v\}$, respectively. The \emph{degree} of a
vertex $v$ is also $deg_G(v)=\vert N_{G}(v) \vert $. The
\emph{minimum} and \emph{maximum degree} of $G$ are denoted by
$\delta =\delta (G)$ and $\Delta =\Delta (G)$, respectively. If
$\delta (G)=\Delta (G)=k$, then $G$ is called $k$-\emph{regular}. %A \emph{Hamiltonian path} in a graph $G$ is a path which contains every vertex of $G$. 
 An \emph{independent set} of $G$ is a subset of vertices of $G$, no two of which are adjacent. And a \emph{maximum independent set} is an independent set of the largest cardinality in $G$. This cardinality is called the \emph{independence number} of $G$, and is denoted by $\alpha(G)$. Also a \emph{mixed independent set} of $G$ is a subset of $V(G)\cup E(G)$, no two objects of which are adjacent or incident, and a \emph{maximum mixed  independent set} is a mixed independent set of the largest cardinality in $G$. This cardinality is called the \emph{mixed independence number} of $G$, and is denoted by $\alpha_{mix}(G)$. Two isomorphic graphs $G$ and $H$ are shown by $G\cong H$.

\vskip 0.2 true cm

We write $C_{n}$ and $P_{n}$ for a \emph{cycle} and a \emph{path} 
of order $n$, respectively, while 
%$K_{m,n}$ and denote a \emph{complete bipartite graph} of order $m+n$ and 
   $G[S]$   is \emph{induced subgraph} of $G$ by a vertex set $S$. The \emph{line graph} $L(G)$ of $G$ is a graph with the vertex set $E(G)$ and two vertices of $L(G)$ are adjacent when they are incident in $G$. %The \emph{Cartesian product} $G \square H$ of two graphs $G$ and $H$ is a graph with $V(G) \times V(H)$ and two vertices  $(g_{1}, h_{1})$ and $(g_{2}, h_{2})$ are adjacent if and only if either $g_{1}=g_{2}$ and $(h_{1}, h_{2}) \in E(H)$, or $h_{1}=h_{2}$ and $(g_{1}, g_{2}) \in E(G)$. 
The \emph{total graph} $T(G)$ of a graph $G=(V,E)$ is the graph whose vertex set is $V\cup E$ and two vertices are adjacent whenever they are either adjacent or incident in $G$ \cite{Behzad}. It is obvious that if $G$ has order $n$ and size $m$, then $T(G)$ has order $n+m$ and size $3m+|E(L(G))|$, and also $T(G)$ contains both $G$ and $L(G)$ as two induced subgraphs and it is the largest graph formed by adjacent and incidence relation between graph elements. Since $deg_{T(G)}(v_i)=2deg_G(v_i)$ and $deg_{T(G)}(e_{ij})=deg_G(v_i)+deg_G(v_j)$, if $G$ is $k$-regular, then $T(G)$ is $2k$-regular. Also we have $\alpha_{mix}(G)=\alpha(T(G))$. 

\vskip 0.2 true cm

Here, we fix a notation for the vertex set and the edge set of line and  total of a graph which we use thorough this paper. For a graph $G=(V,E)$ with the vertex set $V=\{v_i|\ 1\leq i\leq n\}$, we have $V(L(G))=\mathcal{E}$ and $E(L(G))=\{e_{ij}e_{ik}~|~ e_{ij}, e_{ik}\in \mathcal{E} \mbox{ and } j\neq k\}$, $V(T(G))=V\cup \mathcal{E}$ and $E(T(G)) = E\cup E(L(G)) \cup \{e_{ij}v_i,e_{ij}v_j~|~e_{ij}\in \mathcal{E} \}$,  where $\mathcal{E}=\{e_{ij}~|~v_{i}v_{j}\in E\}$. In Figure \ref{TDTC1} a graph $G$ and its total graph are shown for an example.
\begin{figure}[ht]
\centerline{\includegraphics[width=8cm, height=3cm]{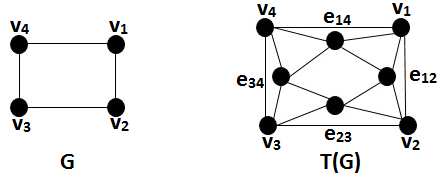}}
\vspace*{-0.2cm}
\caption{ The illustration of $G$ (left) and $T(G)$ (right).}\label{TDTC1}
\end{figure}

%-------------------------------------------------------------
\vskip 0.2 true cm

\textbf{DOMINATION.} Domination in graphs is now well studied in graph theory and the literature
on this subject has been surveyed and detailed in the two books by Haynes,
Hedetniemi, and Slater~\cite{hhs1, hhs2}. A famous type of domination is total domination, and the literature on this subject has been surveyed and detailed in the recent
book~\cite{HeYe13}. A \emph{total dominating set}, briefly TDS, $S$ of a graph $G=(V,E)$ is a subset
of the vertex set of $G$ such that for each vertex $v$, $N_G(v)\cap S\neq \emptyset$. The \emph{total domination number $\gamma_{t}(G)$} of $G$ is the minimum cardinality of a TDS of $G$. Similarly, a subset $S\subseteq V\cup E$ of a graph $G$ is called a \emph{total mixed dominating set}, briefly TMDS, of $G$ if each object of $(V\cup E)$ is either adjacent or incident to an object of $S$, and the \emph{total mixed domination number} $\gamma_{tm}(G)$ of $G$ is the minimum cardinality of a TMDS \cite{KK2017}. A min-TDS/ min-TMDS of $G$ denotes a TDS/ TMDS of $G$ with minimum cardinality. Also we agree that \emph{a vertex $v$ dominates an edge} $e$ or \emph{an edge $e$ dominates a vertex} $v$ mean $v\in e$. Similarly, we agree that \emph{an edge dominates another edge} means they have a common vertex.  The next theorem can be easily obtained.

\begin{thm} \emph{\cite{KK2017}}
\label{gamma_{tm}(G)=gamma_{t}(T(G))}
For any graph $G$ without isolate vertex, $\gamma_{tm}(G)=\gamma_{t}(T(G))$.
\end{thm}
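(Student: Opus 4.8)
The plan is to exhibit the total mixed domination of $G$ and the total domination of $T(G)$ as literally the same combinatorial object, so that their optimal values must coincide. The crucial point, already built into the definition of the total graph, is that the ground set $V\cup\mathcal{E}$ of both problems is the same: it is simultaneously the set of objects (vertices and edges) of $G$ and the vertex set $V(T(G))$. First I would make this identification explicit and then verify that the two notions of ``domination'' are one and the same relation on this common ground set.

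To that end I would record the key adjacency dictionary. By the definition of $T(G)$, two elements $x,y\in V\cup\mathcal{E}$ are adjacent as vertices of $T(G)$ precisely when they are adjacent or incident in $G$; this splits into the three cases vertex--vertex, vertex--edge and edge--edge, and in each case ``adjacent in $T(G)$'' is exactly the relation used in the definition of a TMDS. Thus for every $x\in V\cup\mathcal{E}$ the set of objects that dominate $x$ in the mixed sense equals $N_{T(G)}(x)$.

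Next I would dispose of a well-definedness issue, which is the only role of the hypothesis that $G$ has no isolated vertex. Using $deg_{T(G)}(v_i)=2deg_G(v_i)$ and $deg_{T(G)}(e_{ij})=deg_G(v_i)+deg_G(v_j)\geq 2$, a vertex of $T(G)$ is isolated if and only if it corresponds to an isolated vertex of $G$; hence the hypothesis guarantees $\delta(T(G))\geq 1$, so that $T(G)$ admits a total dominating set and $\gamma_{t}(T(G))$ is defined. The same hypothesis is precisely what makes a TMDS of $G$ exist.

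With these two facts the heart of the argument is a direct set equality. I would show that a subset $S\subseteq V\cup\mathcal{E}$ is a TMDS of $G$ if and only if, viewed as a subset of $V(T(G))$, it is a TDS of $T(G)$: by the adjacency dictionary above, both conditions say exactly that every element $x\in V\cup\mathcal{E}$ satisfies $N_{T(G)}(x)\cap S\neq\emptyset$. Since the TMDS's of $G$ and the TDS's of $T(G)$ are therefore the same family of subsets of the same ground set, their minimum cardinalities agree, which gives $\gamma_{tm}(G)=\gamma_{t}(T(G))$. There is no genuine obstacle here; the only place demanding care is the case analysis in the adjacency dictionary, where one must confirm that the edge--edge ``common vertex'' relation and the vertex--edge incidence relation are captured by $T(G)$ exactly as the TMDS definition requires, with no spurious or missing adjacencies.
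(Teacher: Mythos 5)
Your proof is correct, and it is exactly the identification the paper has in mind: the paper states this result without proof (citing \cite{KK2017} and calling it ``easily obtained''), and the intended argument is precisely that the adjacency relation of $T(G)$ on $V\cup\mathcal{E}$ coincides with the mixed domination relation, so TMDSs of $G$ and TDSs of $T(G)$ are the same sets. Your handling of the no-isolated-vertex hypothesis is also the right (and only) point needing care.
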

%----------------------------------------------------------------
\vskip 0.2 true cm
\textbf{GRAPH COLORING.} Graph coloring is used as a model for a vast
number of practical problems involving allocation of scarce
resources (e.g., scheduling problems), and has played a key role in
the development of graph theory and, more generally, discrete
mathematics and combinatorial optimization. Graph colorability
is NP-complete in the general case, although the problem is solvable
in polynomial time for many classes \cite{GJ}. A \emph{proper coloring} of a graph $G$ is a function from
the vertices of the graph to a set of colors such that any two adjacent vertices have different colors, and the minimum number of colors needed in a proper
coloring of a graph is called the \emph{chromatic number} $\chi (G)$ of $G$. In a simlar way, a \emph{total coloring} of $G$ assigns a color to each vertex and to each edge so that colored objects have different colors when they are adjacent or incident, and the minimum number of colors needed in a total coloring of a graph is called the \emph{total chromatic number} $\chi_{T}(G)$ of $G$ \cite{West}. %The Total Coloring Conjecture (Behzad [1965]) states that 
%\begin{BehConj}
%For every simple graph $G$, $\chi_{T}(G) \leq \Delta(G) + 2$. 
%\end{BehConj}
 A \emph{color class} in a coloring of a graph is a set consisting of all those objects assigned the same color. For simply, if $f$ is a coloring of $G$ with the coloring classes $V_1$, $V_2$, $\cdots$ , $V_{\ell}$, we write $f=(V_1,V_2,\cdots,V_{\ell})$. Motivated by the relation between coloring and total dominating, the concept of total dominator coloring in graphs introduced in \cite{Kaz2015} by Kazemi, and extended in \cite{Hen2015,Kaz-Par,Kaz2014,Kaz2016,KK2018}.

%coloring of the edges of a nonempty graph is defined by Chartrand and et al. in \cite{GP} and $\chi '(G)$ denotes the \emph{edge chromatic number} (or \emph{chromatic index}) of $G$. Trivially $\chi^{'}(G)=\chi(L(G))$.
%-----------------------------------------------
%\vskip 0.2 true cm
%---------------------------------------------------------------------------------------------------------------
\begin{defn} 
\label{total dominator coloring} \emph{ \cite{Kaz2015} A} total dominator coloring,
\emph{briefly TDC, of a graph $G$ with a possitive minimum degree is a proper coloring of $G$ in
which each vertex of the graph is adjacent to every vertex of some
color class. The }total dominator chromatic number $\chi_{d}^t(G)$
\emph{of $G$ is the minimum number of color classes in a TDC of $G$.}
\end{defn}

%----------------------------------------------------------
In \cite{KKM2019}, the authors initiated studying of a new concept called total dominator total coloring in graphs which is obtained from the concept of total dominator coloring of a graph by replacing total coloring of a graph instead of (vertex) coloring of it.
%----------------------------------------------------------
\begin{defn} 
\label{total dominator total coloring} \emph{\cite{KKM2019} A} total dominator total coloring,
\emph{briefly TDTC, of a graph $G$ with a possitive minimum degree is a total coloring of $G$ in
which each object of the graph is adjacent or incident to every object of some
color class. The }total dominator total chromatic number $\chi_d^{tt}(G)$
\emph{of $G$ is the minimum number of color classes in a TDTC of $G$.}
\end{defn}
%---------------------------------------------------------------------------------------------------------------
%\textcolor{blue}{\begin{defn}\label{free color class} \emph{\cite{Hen2015}}\emph{A color class in a TDC $\mathcal{C}$ of $G$ is called} free \emph{if each vertex of $G$ is adjacent to every vertex of some color class different from it.(((By Farshad)))} \end{defn}}

It can be easily obtained the next theorem.

\begin{thm}
\emph{\cite{KKM2019}}
\label{chi_d^{tt}(G)=chi_d^{t}(T(G))}
For any graph $G$ without isolate vertex, $\chi_d^{tt}(G)=\chi_d^{t}(T(G))$.
\end{thm}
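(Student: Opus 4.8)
The plan is to exploit the fact, recorded in the excerpt, that the vertex set of $T(G)$ is exactly the collection of objects $V(G)\cup\mathcal{E}$ of $G$, and that two such objects are adjacent in $T(G)$ precisely when they are adjacent or incident in $G$. This dictionary turns a total coloring of $G$ into a proper (vertex) coloring of $T(G)$ and conversely, with the very same color classes; so the proof should consist of checking that this correspondence also carries the total-dominator clause across, and then reading off the equality of the two minima.

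First I would set up the correspondence at the level of colorings. Given any assignment $f$ of colors to $V(G)\cup\mathcal{E}$, I would observe that $f$ is a total coloring of $G$ (adjacent or incident objects receive distinct colors) if and only if $f$, viewed as an assignment to $V(T(G))$, is a proper coloring of $T(G)$ (adjacent vertices receive distinct colors); this is immediate from the defining adjacency rule $E(T(G)) = E\cup E(L(G)) \cup \{e_{ij}v_i,e_{ij}v_j\}$. In particular the color classes of $f$ as a total coloring of $G$ are literally the color classes of $f$ as a proper coloring of $T(G)$, so the two colorings use the same number $\ell$ of classes.

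Next I would check that the dominator clauses match. An object $x\in V(G)\cup\mathcal{E}$ is adjacent or incident in $G$ to every object of a class $V_k$ exactly when the corresponding vertex of $T(G)$ is adjacent in $T(G)$ to every vertex of $V_k$. Hence $f$ is a TDTC of $G$ (every object of $G$ is adjacent or incident to every object of some class) if and only if $f$ is a TDC of $T(G)$ (every vertex of $T(G)$ is adjacent to every vertex of some class). One small precondition must be discharged: a TDC is defined only for graphs of positive minimum degree. Because $G$ has no isolated vertex, $\deg_G(v_i)\geq 1$ for every $v_i$, so $\deg_{T(G)}(v_i)=2\deg_G(v_i)\geq 2$ and $\deg_{T(G)}(e_{ij})=\deg_G(v_i)+\deg_G(v_j)\geq 2$; thus $\delta(T(G))\geq 2>0$ and the definition of $\chi_d^t(T(G))$ indeed applies.

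Combining these observations, every TDTC of $G$ is a TDC of $T(G)$ with the same number of classes and vice versa, so the two families of colorings coincide and therefore share the same minimum class-count; that is, $\chi_d^{tt}(G)=\chi_d^t(T(G))$. I do not expect a genuine obstacle here: the statement is a direct unfolding of the definitions through the total-graph dictionary, and the only point requiring care is the verification that ``adjacent or incident in $G$'' and ``adjacent in $T(G)$'' agree on both the properness clause and the domination clause, together with the degree bookkeeping that justifies invoking $\chi_d^t$ on $T(G)$.
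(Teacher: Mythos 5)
Your proof is correct and is precisely the definitional unfolding the paper has in mind: the paper states this theorem without proof (``It can be easily obtained''), and your argument --- identifying total colorings of $G$ with proper colorings of $T(G)$ class-by-class, checking that the dominator clause transfers, and verifying $\delta(T(G))>0$ --- is exactly the intended justification. No gaps.
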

%----------------------------------------------------------
For any TDC (TDTC) $f=(V_1,V_2,\cdots,V_{\ell})$ of a graph $G$, a vertex (an object) $v$ is called a \emph{common neighbor} of $V_i$ or we say $V_i$ \emph{totally dominates} $v$, and we write $v\succ_t V_i$, if vertex (object) $v$ is adjacent (adjacent or incident) to every vertex (object) in $V_i$. Otherwise we write $v \not\succ_t  V_i$. The set of all common neighbors of $V_i$ with respect to $f$ is called the \emph{common neighborhood} of $V_i$ in $G$ and denoted by $CN_{G,f}(V_i)$ or simply by $CN(V_i)$. Also every TDC or TDTC of $G$  with $\chi_d^t(G)$ or $\chi_d^{tt}(G)$ color classes is called  a \emph{min}-TDC or a \emph{min}-TDTC, respectively.
% \textcolor{blue}{Also $v$ is called a \emph{private neighbor of $V_i$ with respect to $f$} if $v\succ_t V_i$ and $v\nsucc_t V_j$ for all $j\neq i$ (is it needed?}). 
 For some examples see Figure \ref{TDTC11}.
%---------------------------------------------------
\begin{figure}[ht]\label{TDTC11}
\centerline{\includegraphics[width=12.5cm, height=2cm]{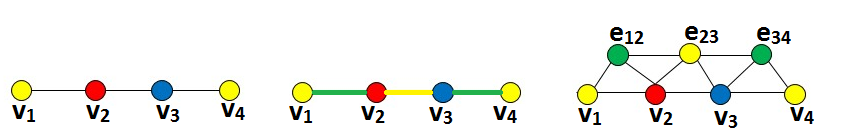}}
\vspace*{0cm}
\caption{A min-TDC of $P_4$ (left) and a min-TDTC of $P_4$ (Middle) with its corresponding min-TDC of $T(P_4)$ (right).} \label{TDTC11}
\end{figure}
%------------------------------------------------------

Also for any TDC $(V_1,V_2,\cdots , V_{\ell})$ and any TDTC $(W_1,W_2,\cdots , W_{\ell})$  of a graph $G$, we have
\begin{equation}
\label{CN(V_i)=V}
\bigcup_{i=1}^{\ell} CN(V_i)=V(G) \mbox{ and } \bigcup_{i=1}^{\ell} CN(W_i)=V(G)\cup E(G).
\end{equation}

\vspace{0.2cm}

\textbf{GOAL.} In \cite{KKM2019}, the authors initiated to study the total dominator total coloring of a graph and found some useful results, and presented some problems. Finding the total dominator total chromatic numbers of cycles and paths were two of them which we consider them here.

\vspace{0.15cm}

We recall the following proposition from \cite{Kaz2015} which is useful for our investigation. Propositions \ref{chi^{t}_d T(C_n)} and \ref{chi^{t}_d T(P_n)} show that the upper bound given in Proposition \ref{chi_d^t =<g_t+min{chi(G-S)}} is tight.
%-------------------------------------------------------------
\begin{prop}\emph{\cite{Kaz2015}}
\label{chi_d^t =<g_t+min{chi(G-S)}} For any connected graph $G$ with $\delta(G)\geq 1$, 
\begin{equation}\label{chi_d^t(G) =<gama_t(G)+ min chi(G[V(G)-S])}
\chi_d^t(G) \leq \gamma_{t}(G)+ \min_S \chi(G[V(G)-S]),
\end{equation}
where $S\subseteq V(G)$ is a min-TDS of $G$. And so $\chi_d^t(G) \leq \gamma_t(G)+ \chi(G)$.
\end{prop}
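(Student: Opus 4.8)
The plan is to construct an explicit total dominator coloring whose number of color classes matches the right-hand side. First I would fix a min-TDS $S$ of $G$ that attains the minimum $\min_S \chi(G[V(G)-S])$, so that $|S|=\gamma_t(G)$. The idea is to spend $\gamma_t(G)$ colors making each vertex of $S$ into its own singleton color class, and then to color the remaining vertices with a fresh palette disjoint from the first.

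Concretely, I would write $S=\{u_1,\dots,u_{\gamma_t(G)}\}$ and assign to these vertices the distinct colors $1,2,\dots,\gamma_t(G)$, one per vertex, and then take a proper coloring of the induced subgraph $G[V(G)-S]$ using $\chi(G[V(G)-S])$ brand-new colors $\gamma_t(G)+1,\dots,\gamma_t(G)+\chi(G[V(G)-S])$. The first thing to check is that the resulting coloring $f$ of $G$ is proper: edges inside $S$ are fine because all of $S$ receives pairwise distinct colors; edges inside $V(G)-S$ are fine by the choice of proper coloring there; and edges between $S$ and $V(G)-S$ are fine because the two palettes are disjoint. Hence $f$ is a proper coloring of $G$ using exactly $\gamma_t(G)+\chi(G[V(G)-S])$ colors.

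The key step is verifying that $f$ is a total dominator coloring. Because $S$ is a total dominating set, every vertex $v\in V(G)$ — including the vertices lying in $S$ — has at least one neighbor $u_i\in S$. Since $\{u_i\}$ is a singleton color class and $v$ is adjacent to $u_i$, the vertex $v$ is adjacent to every vertex of that color class, i.e.\ $v\succ_t \{u_i\}$. Thus each vertex totally dominates some color class, so $f$ is indeed a TDC, and consequently $\chi_d^t(G)\le \gamma_t(G)+\chi(G[V(G)-S])$. Taking the minimum over all choices of min-TDS $S$ yields the first inequality $\chi_d^t(G)\le \gamma_t(G)+\min_S\chi(G[V(G)-S])$.

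Finally, the second inequality follows from the monotonicity of the chromatic number under taking induced subgraphs: for every $S$ we have $\chi(G[V(G)-S])\le\chi(G)$, hence in particular $\min_S\chi(G[V(G)-S])\le\chi(G)$. I do not expect a genuine obstacle here; the only points requiring a little care are ensuring the two palettes are disjoint so that properness is preserved across the $S$/$(V(G)-S)$ boundary, and invoking the total domination property for \emph{all} vertices — those in $S$ as well as those outside it — so that every object, not merely those in $V(G)-S$, is shown to totally dominate a color class.
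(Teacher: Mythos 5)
Your proposal is correct and is the standard argument for this bound: singleton color classes on a min-TDS $S$ guarantee the total dominator property for every vertex (including those of $S$), and a fresh proper coloring of $G[V(G)-S]$ supplies the remaining $\chi(G[V(G)-S])$ classes. The paper itself only cites this proposition from \cite{Kaz2015} without reproducing a proof, and your argument matches the one given there, including the correct handling of properness across the $S$/$(V(G)-S)$ boundary and the deduction of the weaker bound $\chi_d^t(G)\leq\gamma_t(G)+\chi(G)$ from monotonicity of $\chi$ under induced subgraphs.
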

%-------------------------------------------------------------
%--------------------------------------------------------------------
\section{Cycles}
Here, we calculate the total dominator total chromatic number of cycles. First we we recall a proposition from \cite{KK2017} and calculate the mixed indepence number of a cycle.
%----------------------------
%-----------------------------------
\begin{prop} \emph{\cite{KK2017}}
\label{gamma_t T(C_n)} For any cycle $C_n$ of order $n\geq 3$,
\begin{equation*}
\gamma_{tm}(C_n)=\left\{
\begin{array}{ll}
4\lceil n/7\rceil -3 & \mbox{if }n\equiv 1 \pmod{7},\\
4\lceil n/7\rceil -2 & \mbox{if }n\equiv 2,3 \pmod{7},\\
4\lceil n/7\rceil -1 & \mbox{if }n\equiv 4 \pmod{7},\\
4\lceil n/7\rceil  & \mbox{if }n\equiv 0,5,6 \pmod{7},
\end{array}\right.
\end{equation*}
%----------------
or equivalently 
%------------------
\begin{equation*}
\gamma_{tm}(C_n)=\left\{
\begin{array}{ll}
\lceil \frac{4n}{7}\rceil +1 & \mbox{if }n\equiv 5 \pmod{7},\\
\lceil \frac{4n}{7}\rceil     & \mbox{if }n\not\equiv 5 \pmod{7}.
\end{array}\right.
\end{equation*}
\end{prop}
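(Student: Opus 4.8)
The plan is to exploit the cyclic structure underlying the mixed objects of $C_n$. Writing $e_i=v_iv_{i+1}$ (indices mod $n$), I arrange all $2n$ objects in the cyclic order $v_1,e_1,v_2,e_2,\ldots,v_n,e_n$ and relabel them $O_1,O_2,\ldots,O_{2n}$, with $O_{2i-1}=v_i$ and $O_{2i}=e_i$. A direct check of the incidence/adjacency relations shows that $O_j$ is adjacent-or-incident to exactly $O_{j\pm 1}$ and $O_{j\pm 2}$ (indices mod $2n$); equivalently, by Theorem \ref{gamma_{tm}(G)=gamma_{t}(T(G))} we have $\gamma_{tm}(C_n)=\gamma_t(T(C_n))$ with $T(C_n)\cong C_{2n}^2$. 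Thus a TMDS of $C_n$ is exactly a total dominating set of the square cycle on $2n$ positions, and I will prove the formula in this reformulated setting.

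Next I translate the two defining requirements into conditions on the gaps of a chosen set $S$. List the positions of $S$ cyclically and let $(g_1,\ldots,g_k)$ be the consecutive gaps, so $\sum_i g_i=2n$ with each $g_i\ge 1$. Since $O_j$ dominates only the four positions at cyclic distance $1$ or $2$, the domination requirement is equivalent to $g_i\le 5$ for every $i$ (a gap of $6$ leaves its midpoint uncovered), while the total requirement — each chosen object must itself have a chosen neighbor within distance $2$ — is equivalent to the condition that no two cyclically consecutive gaps are both $\ge 3$. These two conditions on the cyclic sequence $(g_i)$ capture precisely what it means to be a TMDS.

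For the lower bound I count big gaps. Let $B=|\{i:g_i\ge 3\}|$. The no-two-consecutive condition makes the big gaps an independent set in the cyclic adjacency pattern $C_k$ on the gaps, so $B\le\lfloor k/2\rfloor$. Bounding each big gap by $5$ and each small gap by $2$ gives $2n=\sum_i g_i\le 3B+2k\le 3\lfloor k/2\rfloor+2k=:F(k)$. Since $F(2t)=7t$ and $F(2t+1)=7t+2$, the least $k$ with $F(k)\ge 2n$ is exactly the claimed value; in particular, for $n\equiv 5\pmod 7$ the candidate $k=\lceil 4n/7\rceil$ is odd and yields $F(k)=2n-1<2n$, which is precisely the parity obstruction forcing the extra $+1$ in this class. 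For the matching upper bound I will exhibit, for each residue of $n$ modulo $7$, an explicit gap sequence of the claimed length obtained by repeating the optimal block $(5,2)$ — two gaps of total length $7$ — and adjusting one final block so that the gaps sum to $2n$ while preserving $g_i\le 5$ and the no-two-consecutive-big property; unwinding the relabeling turns each such sequence into a concrete TMDS of $C_n$.

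The main obstacle is the exact lower bound in the single exceptional class $n\equiv 5\pmod 7$: the crude averaging ``at most $7/2$ per chosen object'' only yields $\lceil 4n/7\rceil$, and extracting the extra unit requires the parity refinement $B\le\lfloor k/2\rfloor$ together with the arithmetic of $F$. A secondary but unavoidable nuisance is the bookkeeping of the wrap-around in the periodic construction: in each residue class one must verify that the adjusted final block still closes up cyclically without creating two consecutive big gaps, so that the constructed set is genuinely a TMDS realizing the bound.
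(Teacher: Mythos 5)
Your proposal targets a statement that this paper does not actually prove: Proposition \ref{gamma_t T(C_n)} is recalled from \cite{KK2017}, and the only trace of its proof here is the list of explicit min-TDSs $S_0,\dots$ of $T(C_n)$ reproduced inside the proof of Proposition \ref{chi^{t}_d T(C_n)}, which would supply the upper bound. So there is no in-paper argument to match yours against; judged on its own, your route is correct and self-contained. The observation $T(C_n)\cong C_{2n}^{2}$ is right (in the cyclic order $v_1,e_1,v_2,e_2,\dots$ each object is adjacent or incident to exactly the four objects at cyclic distance $1$ or $2$), so by Theorem \ref{gamma_{tm}(G)=gamma_{t}(T(G))} the problem becomes $\gamma_t(C_{2n}^2)$, and your translation into gap conditions (every gap at most $5$; no two cyclically consecutive gaps both at least $3$) is exactly the right characterization of a TMDS. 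The lower bound is the substantive part and it checks out: $B\le\lfloor k/2\rfloor$ gives $2n\le 3\lfloor k/2\rfloor+2k=F(k)$ with $F(2t)=7t$ and $F(2t+1)=7t+2$, and the least $k$ with $F(k)\ge 2n$ agrees with the stated value in every residue class, including the parity obstruction $F(4q+3)=2n-1$ that forces the extra unit when $n=7q+5$. What you have deferred is routine and does go through: the gap sequences $(5,2)^{2q}$, $(5,2)^{2q}(2)$, $(5,2)^{2q}(2,2)$, $(5,2)^{2q}(4,2)$, $(5,2)^{2q}(5,2,1)$, $(5,2)^{2q}(4,2,2,2)$, $(5,2)^{2q+1}(3,2)$ for $n\equiv 0,1,\dots,6\pmod 7$ respectively all sum to $2n$, have the right length, and close up cyclically with no two adjacent big gaps. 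Compared with working directly inside $T(C_n)$ as \cite{KK2017} apparently does, your reduction buys a uniform and easily checkable lower-bound argument at the cost of some wrap-around bookkeeping in seven constructions; as a plan it is sound, and only that finite verification separates it from a complete proof.
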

%-------------------------------------------------------
\begin{lem}\label{alpha (T(C_n))=lfloor 2n/3 rfloor}
For any cycle $C_n$ of order $n\geq 3$, $\alpha_{mix}(C_n)=\lfloor \frac{2n}{3}\rfloor$.
\end{lem}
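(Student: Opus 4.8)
The plan is to reduce the computation to an ordinary independence number by invoking the identity $\alpha_{mix}(C_n)=\alpha(T(C_n))$ recorded in the introduction, and then to identify $T(C_n)$ explicitly. Concretely, I would first show that $T(C_n)\cong C_{2n}^{2}$, the square of the cycle on $2n$ vertices. List the $2n$ objects of $V(C_n)\cup\mathcal{E}$ in the cyclic order $v_1,e_{12},v_2,e_{23},\dots,v_n,e_{n1}$, inserting each edge between the two vertices it joins, so that $v_i$ sits at position $2i-1$ and $e_{i,i+1}$ at position $2i$. Running through the adjacency types of $T(C_n)$---the original edges $v_iv_{i+1}$, the line-graph edges between incident edges $e_{i-1,i}$ and $e_{i,i+1}$, and the incidences $v_ie_{i-1,i}$ and $v_ie_{i,i+1}$---one checks that each produces a pair at cyclic distance $1$ or $2$, and conversely that every pair at cyclic distance $1$ or $2$ is one of these. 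Hence two objects are adjacent in $T(C_n)$ precisely when their positions are at cyclic distance $1$ or $2$, which is exactly the adjacency of $C_{2n}^{2}$.

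Having made this identification, I would compute $\alpha(C_{2n}^{2})$ in the usual two halves. For the upper bound, suppose $I$ is an independent set of $C_{2n}^{2}$ of size $k$; arranging its elements cyclically partitions $\mathbb{Z}_{2n}$ into $k$ arcs whose lengths sum to $2n$, and each arc has length at least $3$ since consecutive chosen positions must differ by at least $3$. Thus $3k\le 2n$, so $k\le\lfloor 2n/3\rfloor$ and $\alpha_{mix}(C_n)\le\lfloor \frac{2n}{3}\rfloor$. For the matching lower bound, it suffices to place $k=\lfloor 2n/3\rfloor$ positions around $\mathbb{Z}_{2n}$ with all consecutive gaps at least $3$; this is possible because $3k\le 2n$, so one may take $k-1$ gaps of length $3$ and a single gap of length $3+(2n-3k)$, all of which are at least $3$. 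The resulting set of objects is mixed independent of size $\lfloor 2n/3\rfloor$, whence $\alpha_{mix}(C_n)=\lfloor \frac{2n}{3}\rfloor$.

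I expect the only real content of the argument to be the structural identification in the first paragraph: one must verify both that every adjacency of $T(C_n)$ corresponds to cyclic distance at most $2$ and that no spurious adjacencies are introduced, taking care with the wrap-around objects $e_{n1}$ and $v_1$. Once $T(C_n)\cong C_{2n}^{2}$ is established, both bounds are routine gap counts. Since $2n\ge 6$ for every $n\ge 3$, the square $C_{2n}^{2}$ is never complete and no small-case degeneracy occurs, so the argument---and the resulting value $\lfloor \frac{2n}{3}\rfloor$---is uniform over all $n\ge 3$. I note that one could equally bypass the total graph and argue directly on $V(C_n)\cup E(C_n)$, since the mixed independence condition is itself exactly the ``cyclic distance at least $3$'' condition in the ordering above; the $C_{2n}^{2}$ formulation is merely the cleanest way to package it.
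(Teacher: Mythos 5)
Your proof is correct, but it reaches the result by a different route than the paper. The paper works directly inside $T(C_n)$: for the upper bound it double-counts triangles (each object of a mixed independent set lies in three of the $2n$ triangles of $T(C_n)$, and no triangle can meet an independent set twice, so $3\alpha\le 2n$), and for the lower bound it exhibits explicit independent sets such as $\{v_{3i-2},e_{(3i-1)(3i)}\}$, split into cases according to $n\bmod 3$. You instead first prove the structural identification $T(C_n)\cong C_{2n}^{2}$ via the interleaved cyclic ordering $v_1,e_{12},v_2,e_{23},\dots$, and then run the standard arc-length count for independent sets in the square of a cycle. The two counting arguments are at bottom the same double count (the $2n$ triangles of $T(C_n)$ are exactly the $2n$ windows of three consecutive positions in your ordering), but your packaging buys two things: the lower bound becomes a uniform one-line gap placement with no case split on $n\bmod 3$, and the upper bound avoids a small imprecision in the paper's version (for $n=3$ the total graph is the octahedron, which has eight triangles with each vertex on four of them, not $2n$ triangles with three per vertex, so the paper's stated counts are literally off in that case even though the bound survives). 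The paper's approach, on the other hand, stays entirely inside $T(C_n)$ and reuses machinery (triangles of $T(C_n)$) that it needs again in the proof of Proposition \ref{chi^{t}_d T(C_n)}. Your closing remark that one can bypass the total graph altogether and read the mixed independence condition as ``cyclic distance at least $3$'' on $V\cup E$ is accurate and is arguably the cleanest formulation of all.
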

\begin{proof} 
Let  $C_{n}:v_{1}v_{2}\cdots v_{n}$ be a cycle of order $n\geq 3$. Then $V(T(C_{n}))=V\cup \mathcal{E}$ where  $\mathcal{E}=\{e_{i(i+1)}~|~1\leq i \leq n \}$. Since every vertex in an independent set belongs to exaxtly three tiangles and also $T(C_n)$ has $2n$ distinct triangles, we obtain $\alpha (T(C_n))\leq \lfloor 2n/3\rfloor$. On the other hand, since 
\begin{equation*}
\begin{array}{ll}
\{v_{3i-2},e_{(3i-1)(3i)}~|~ 1\leq i \leq \lfloor \frac{n}{3} \rfloor  \} & \mbox{ when }n\equiv 0,1 \pmod{3},\\ %\mbox{ and}\\
\{v_{3i-2}~|~ 1\leq i \leq \lceil \frac{n}{3} \rceil  \}\cup \{e_{(3i-1)(3i)}~|~ 1\leq i \leq \lfloor \frac{n}{3} \rfloor  \} & \mbox{ when }n\equiv 2 \pmod{3},
\end{array}
\end{equation*}
are independent sets of cardinality $\lfloor 2n/3\rfloor$, we have $\alpha_{mix}(C_n)=\alpha (T(C_n))=\lfloor \frac{2n}{3}\rfloor$. 
\end{proof}
%-----------------------------------------------------------------
\begin{prop}
\label{chi^{t}_d T(C_n)}
For any cycle $C_n$ of order $n\geq 3$,
\begin{equation*}
\chi^{tt}_d(C_n)=\left\{
\begin{array}{ll}
\gamma_{tm}(C_n)+1  & \mbox{if }n=3,4,5 \\
\gamma_{tm}(C_n)+2  & \mbox{if }n=6,9,12 \\
\gamma_{tm}(C_n)+3  & \mbox{if }n\geq 7 ~\mbox{and}~n \neq 9,12.
\end{array}
\right.
\end{equation*}
which by considering Proposition \ref{gamma_t T(C_n)} implies  
%-------
\begin{equation*}
\chi^{tt}_d(C_n)=\left\{
\begin{array}{ll}
n & \mbox{if }3\leq n \leq 8,\\
n-1 & \mbox{if }n=9,
\end{array}\right.
\end{equation*}
%---------------
and for $n\geq 10$,
%-----------------------
\begin{equation*}
\chi^{tt}_d(C_n)=\left\{
\begin{array}{ll}
\lceil \frac{4n}{7}\rceil +4 & \mbox{if }n\equiv 5 \pmod{7} \mbox{ and } n\neq 12,\\
\lceil \frac{4n}{7}\rceil +3 & \mbox{if }n\not\equiv 5 \pmod{7} \mbox{ or } n=12.
\end{array}\right.
\end{equation*}
%----------------
\end{prop}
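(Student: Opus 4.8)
The plan is to pass to the total graph and compute $\chi_d^t(T(C_n))$, which equals $\chi^{tt}_d(C_n)$ by Theorem \ref{chi_d^{tt}(G)=chi_d^{t}(T(G))}; throughout I would use $\gamma_t(T(C_n))=\gamma_{tm}(C_n)$ (Theorem \ref{gamma_{tm}(G)=gamma_{t}(T(G))}) together with the explicit value in Proposition \ref{gamma_t T(C_n)}. The first thing I would record is a clean picture of the host graph: ordering the $2n$ objects cyclically as $v_1,e_{12},v_2,e_{23},\dots,v_n,e_{n1}$, two objects are adjacent precisely when they sit at cyclic distance $1$ or $2$, so $T(C_n)$ is the square $C_{2n}^2$. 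Hence every object $x$ has exactly four neighbours, and the subgraph induced on $N(x)$ has independence number $2$ (a path $P_4$ when $n\ge 4$). Since a color class that is totally dominated by an object is an independent set contained in that object's neighbourhood, this yields the pivotal structural fact: \emph{in any TDTC of $C_n$, every totally dominated color class has at most two objects.}

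For the upper bound I would apply Proposition \ref{chi_d^t =<g_t+min{chi(G-S)}} to $T(C_n)$: pick a minimum TDS $S$ of $T(C_n)$ (equivalently a minimum TMDS of $C_n$, realized by the period-$7$ pattern behind Proposition \ref{gamma_t T(C_n)}), colour each element of $S$ with its own colour, and properly colour the rest. As $S$ totally dominates $T(C_n)$, every object totally dominates one of these singletons, so this is a TDTC using $\gamma_{tm}(C_n)+\chi(T(C_n)-S)$ colours. Removing $S$ leaves a disjoint union of path powers $P_k^2$, each $3$-colourable by residues modulo $3$, giving the generic bound $\gamma_{tm}(C_n)+3$ for $n\ge 7$, $n\ne 9,12$. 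For $n=6,9,12$ I would exhibit a special minimum TMDS whose complement is bipartite (hence $+2$), and for $n=3,4,5$ a direct colouring whose totally dominated classes are size-two pairs that tile all of $T(C_n)$ (hence $+1$), the model being $T(C_3)\cong K_{2,2,2}$ coloured by its three non-edges.

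For the lower bound — which does \emph{not} follow from Proposition \ref{chi_d^t =<g_t+min{chi(G-S)}} and is the heart of the matter — I would combine the structural fact with $\bigcup_i CN(W_i)=V\cup E$ from \eqref{CN(V_i)=V}. Let $D$ be the union of all totally dominated color classes. Every object is adjacent to all of some nonempty totally dominated class, so $D$ is a total dominating set of $T(C_n)$, whence $|D|\ge\gamma_{tm}(C_n)$. Writing $g_1,g_2$ for the numbers of totally dominated classes of sizes $1$ and $2$ and $b$ for the number of remaining classes, one has $\ell=g_1+g_2+b$ with $g_1+2g_2=|D|\ge\gamma_{tm}(C_n)$. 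Two further constraints feed in: counting how many objects each dominated class can dominate (a singleton covers its four neighbours, a size-two class covers at most its two common neighbours) forces $4g_1+2g_2\ge 2n$ via \eqref{CN(V_i)=V}; and the $b$ remaining classes are independent sets of size at most $\alpha_{mix}(C_n)=\lfloor 2n/3\rfloor$ by Lemma \ref{alpha (T(C_n))=lfloor 2n/3 rfloor}. Optimizing $\ell$ under these constraints drives $\ell$ up to $\gamma_{tm}(C_n)+3$ in the generic range.

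The hard part, which I would settle by a case analysis on $n\bmod 7$, is pinning the exact additive constant and isolating the exceptions. The difficulty is precisely the trade-off exposed by the size-two dominated classes: each packs two objects into one colour but dominates very few objects, so one must show that for $n\ge 7$ ($n\ne 9,12$) no mixture of singletons, size-two dominated classes, and large undominated classes can beat $\gamma_{tm}(C_n)+3$, whereas for $n=6,9,12$ and $n\le 5$ the tighter packings genuinely exist. I expect the cleanest route is a dichotomy: when $b$ is forced small, the counting inequalities push $|D|$ strictly above $\gamma_{tm}(C_n)$ by the required amount, and otherwise $b$ itself supplies the surplus; converting these inequalities into the exact constant then reduces to a short finite verification over the residues modulo $7$ and the listed small orders, matched against the explicit optimal colourings from the upper-bound step.
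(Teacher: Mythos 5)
Your overall strategy matches the paper's: pass to $T(C_n)$, get the upper bound $\gamma_{tm}(C_n)+3$ from Proposition \ref{chi_d^t =<g_t+min{chi(G-S)}} with the period-$7$ minimum TDSs, and for the lower bound exploit that every totally dominated class has at most two objects (the paper's \texttt{Fact 2}) together with $\bigcup_i CN(W_i)=V\cup E$ and the bound $\alpha_{mix}(C_n)=\lfloor 2n/3\rfloor$. But there is a genuine gap in the lower bound: the three counting constraints you extract ($g_1+2g_2\ge\gamma_{tm}$, $4g_1+2g_2\ge 2n$, and the size budget $g_1+2g_2+b\lfloor 2n/3\rfloor\ge 2n$) do not force $\ell\ge\gamma_{tm}(C_n)+3$. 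For example, at $n=14$ (where $\gamma_{tm}=8$, $\alpha=9$, and one must show $\ell\ge 11$) the point $(g_1,g_2,b)=(6,2,2)$ satisfies all three constraints and gives $\ell=10$; at $n=8$ the points $(3,2,2)$ and $(4,1,2)$ give $\ell=7$ instead of the required $8$. Your domination inequality is also weaker than what the paper uses: since every object is adjacent to \emph{all} of some dominated class, one representative per dominated class already forms a TDS, giving $g_1+g_2\ge\gamma_{tm}$ rather than $g_1+2g_2\ge\gamma_{tm}$ — and even that stronger form does not close the cases above. The paper closes them with genuinely additional structure: \texttt{Facts 9--10} relating isolated vertices of $T(C_n)[\bigcup_{|V_k|=1}V_k]$ to the number of size-two dominated classes, followed by per-$n$ arguments showing that the surviving configurations force the two large classes $V_1\cup V_2$ to induce a $K_3$, a contradiction with properness. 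Deferring all of this to ``a short finite verification over the residues modulo $7$'' understates where the actual work lies.

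Two further omissions. First, for large $n$ your linear constraints still fall short (e.g.\ at $n=21$ they permit $\ell=14$ while $\gamma_{tm}+3=15$); the paper handles $n\ge 20$ and $n=15,18$ by a separate packing argument — grouping the $2n$ triangles of $T(C_n)$ into blocks of five consecutive ones, each block requiring two color classes, so $\ell\ge 2\lfloor 2n/5\rfloor\ge\gamma_{tm}(C_n)+3$ — and your proposal has no substitute for it. Second, your upper bounds for the exceptional orders $n=9,12$ rest on the unverified claim that some minimum TMDS has bipartite complement in $T(C_n)$; the paper instead writes down explicit $8$- and $10$-class colorings (which are not of the ``singletons on $S$ plus proper coloring of the rest'' form — they use size-two dominated classes), so you would still owe those constructions.
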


\begin{proof}
Let $C_{n}:v_{1}v_{2}\cdots v_{n}$ be a cycle of order $n\geq 3$. Then $V(T(C_{n}))=V\cup \mathcal{E}$ where $\mathcal{E}=\{ e_{i(i+1)}~|~1\leq i \leq n \}$. We know from \cite{KK2017} that for $n \geq 3$, the sets
\begin{equation*}
\begin{array}{ll}
S_0=\{v_{7i+2}, v_{7i+3}, e_{(7i+5)(7i+6)}, e_{(7i+6)(7i+7)}~|~ 0 \leq i \leq \lfloor n/7\rfloor-1\} & \mbox{if }n \equiv 0 \pmod{7}, \\
S=S_0 \cup\{e_{(n-1)n}\}& \mbox{if }n \equiv 1 \pmod{7}, \\
S=S_0 \cup\{v_{n-1},v_{n}\} & \mbox{if }n \equiv 2,3 \pmod{7}, \\
S=S_0 \cup\{v_{n-2},v_{n-1},v_n\} & \mbox{if }n \equiv 4 \pmod{7}\\
S=S_0 \cup\{v_{n-3},v_{n-2},v_{n-1},v_n\} & \mbox{if }n \equiv 5 \pmod{7},\\
S=S_0 \cup\{v_{n-4},v_{n-3},e_{(n-2)(n-1)},e_{(n-1)n}\} & \mbox{if }n \equiv 6 \pmod{7}.
\end{array}
\end{equation*}
are min-TDSs of $T(C_n)$, and since $\chi (T(C_n)-S_r)\leq 3$ for $0\leq r\leq 6$, we have 
\begin{equation}
\label{chi^{t}_d (T(C_n))  leq gamma_t(T(C_n))+3}
\chi^{tt}_d (C_n)\leq \gamma_{tm}(C_n)+3,
\end{equation} 
%--------------------------
by  Proposition \ref{chi_d^t =<g_t+min{chi(G-S)}}. For $n\geq 20$ or $n=15,18$, since $T(C_n)$ has $2n$ distinct triangles with the vertex sets $\{v_i, v_{i+1}, e_{i(i+1)}\}$  or $\{e_{i(i+1)}, v_{i+1}, e_{(i+1)(i+2)}\}$ for $1\leq i \leq n$, and at least two color classes are needed for totally dominating the vertices of the consecutive tiangles with the vertex sets $\{ v_{i}, v_{i+1}, v_{i+2}, v_{i+3}, e_{i(i+1)}, e_{(i+1)(i+2)}, e_{(i+2)(i+3)} \}$  or $\{v_{i+1}, v_{i+2},v_{i+3}, e_{i(i+1)}, e_{(i+1)(i+2)}, e_{(i+2)(i+3)},e_{(i+3)(i+4)}\}$, we conclude that the number of used color classes in  $T(C_n)$ is at least $2 \lfloor \frac{2n}{5} \rfloor \geq \gamma_{t}(T(C_{n}))+3 $, and so $\chi^{tt}_d(C_n)=\chi_d^t(T(C_n))=\gamma_{tm}(C_{n})+3 $ by (\ref{chi^{t}_d (T(C_n))  leq gamma_t(T(C_n))+3}). Therefore, by considering the following facts in which $f=(V_{1},V_{2},\cdots,V_{\ell})$ is an arbitrary min-TDC of $T(C_{n})$, $|V_1|\geq |V_2|\geq \cdots \geq |V_{\ell}|$, $A_{i}=\{V_{k}~|~ v \succ_{t} V_{k}\mbox{ and }|V_{k}| = i \mbox{ for some }v \in V\cup \mathcal{E}\}$ and $|A_{i}|=a_i$ for $1 \leq i \leq \alpha$ where $\alpha:=\alpha_{mix}(C_n)=\lfloor \frac{2n}{3} \rfloor $, we continue our proof when $3 \leq n \leq 19$ except $n=15,18$.
%------------------------
\begin{itemize}
\item[$\star$] \texttt{Fact 1.} $|V_{k}|\leq \lfloor \frac{2n}{3}\rfloor $ for $1\leq k\leq \ell$, and $\sum_{i=1}^{\ell}|V_i|=2n$.
 
\item[$\star$] \texttt{Fact 2.} For any $v \in V\cup \mathcal{E}$, if $v \succ_{t} V_{k}$ for some $1\leq k\leq \ell$, then $|V_{k}|\leq 2$.

\item[$\star$] \texttt{Fact 3.} For any vertex  $v \in M$, if $v \succ_{t} V_{k}$ for some $1\leq k\leq \ell$ and $|V_k|=2$, then $CN(V_k)\cap M=\{v\}$, where $M\in \{\mathcal{E},V\}$, and since $CN(V_k)\cap V\neq \emptyset$ if and only if $CN(V_k)\cap \mathcal{E}\neq \emptyset$, we have $|CN(V_k)|=2$.

\item[$\star$] \texttt{Fact 4.} For any color class $V_k$ of cardinality one, $|CN(V_k)\cap V|=|CN(V_k)\cap \mathcal{E}|=2$.%, and soif $v \succ_{t} V_{k}$ for some $1\leq k\leq \ell$ and $|V_k|=1$, then $CN(V_k)\cap M=\{v,w\}$ for some vertex $w$, where $M\in \{\mathcal{E},V\}$.}

\item[$\star$] \texttt{Fact 5.} $2a_1+a_2\geq n$ (by \texttt{Facts} 3, 4).

\item[$\star$] \texttt{Fact 6.} $\gamma_{tm}(C_n)\leq a_1+a_2\leq \ell $. Because the set $S$ is a TDS of $T(C_n)$ where $|S\cap V_i|=1$ for each $V_i\in A_1\cup A_2$ by \texttt{Fact} 2 (for left), and $a_1+\cdots +a_{\alpha}=\ell$ (for right).

\item[$\star$] \texttt{Fact 7.} $n-\ell \leq a_1 \leq \lfloor \frac{\alpha \ell -2n}{\alpha -1} \rfloor$. Because
%-------------------------------------
\begin{equation*}
%\label{2n-1-a_1 leq (ell-a_1)alpha}
\begin{array}{llll}
2n-a_1             &   =   & |V(T(C_n))|-|\{V_{i} ~|~|V_{i}|=1 \mbox{ for } 1\leq i \leq \ell\}| \\
                          & =     & \Sigma_{|V_i|\geq 2} |V_i|\\
                          & \leq & (\ell-a_1)\alpha
     \end{array}
\end{equation*} 
%-------------------------------------
implies the upper bound, and for the lower bound
%-------------------------------------
\begin{equation}
\begin{array}{llll}
2\ell -n             &   \geq    & 2(a_1+a_2)-n & (\mbox{by } \texttt{Fact } 6) \\
                       &  \geq     & a_2                & (\mbox{by }  \texttt{Fact } 5),
     \end{array}
\end{equation} 
%-------------------------------------
implies 
%-------------------------------------
\begin{equation*}
\begin{array}{llll}
a_1             &  \geq   & \frac{n-a_2}{2} & (\mbox{by }  \texttt{Fact } 5) \\
                          & \geq  & n-\ell & \mbox{by } (3.0.2).
     \end{array}
\end{equation*} 
%-------------------------------------

\item[$\star$] \texttt{Fact 8.} $\max\{0,n-2a_1,\gamma_{tm}(C_n)-a_1\}\leq a_2\leq \min\{ 2\ell-n, \ell -a_1\}$ (by \texttt{Facts} $5-7$).
%-----------------------------
\item[$\star$]  \texttt{Fact 9.} If $J=\{k~|~|V_k|=2 \mbox{ and } |CN(V_k)|\neq 0 \}$, then the number of isolate vertices of $T(C_n)[\bigcup_{|V_k|=1}V_k]$ is at most $|J|$ (because $(|V_k|,|CN(V_k)|)=(2,2)$ implies $T(C_n)[CN(V_k)]\cong K_2$, by \texttt{Fact 3}). 
\item[$\star$] \texttt{Fact 10.}  If $|\bigcup_{|V_i|=1} CN(V_i)|=4a_1$, then $a_1\leq |J|\leq a_2$.

%-----------------------
\end{itemize}
%-------------------  
\texttt{Fact 1} implies $\ell \geq n$ for $3\leq n \leq 4$, and since the coloring functions 
\[
(\{v_{1}, e_{23}\},\{v_{3}, e_{12}\},\{v_{2}, e_{13}\}) \mbox{ and } (\{e_{12}, e_{34}\},\{e_{23}, e_{14}\},\{v_{1}, v_{3}\},\{v_{2}, v_{4}\})
\]
are respectively TDCs of $T(C_{3})$ and $T(C_{4})$, we have $\chi^{tt}_d(C_n)=\chi^{t}_d (T(C_n))=n$ for $3\leq n \leq 4$. So we assume $5 \leq n\leq 19$  except $n=15,18$, and continue our proof in the following cases by this assumption that $\mathcal{H}_k$ denotes a graph of order $k$ with positive minimum degree.  

\vspace{0.2cm}

\textbf{Case 1.} $5\leq n \leq 8$. Since for $5 \leq n \leq 8$ the coloring function $g$ with the criterion $g( e_{(i+1)(i+2)})=g(v_{i})=i$ when $1\leq i \leq n$ is a TDC of $T(C_n)$, we have 
\begin{equation}
\label{chi^{t}_d (T(C_n)) leq n, for 5 leq n leq 8}
\chi^{tt}_d (C_n)\leq n ~\mbox{ for }5\leq n \leq 8.
\end{equation}
%-------------------
Since also $\ell\geq n$ for $5 \leq n \leq 8$, by the following reasons, (\ref{chi^{t}_d (T(C_n)) leq n, for 5 leq n leq 8}) implies $\chi^{tt}_d (C_n)=n$. 
%-------------------------------
\begin{itemize}
\item $n=5$. Let $\ell=4$. Then $(a_1,a_2)=(1,3)$. Because $2a_1+a_2\geq 5$, $a_1+a_2=4$, $a_1=1$ and $\max\{0,5-2a_1,4-a_1\}\leq a_2\leq \min\{ 4, 5-a_1\}$ by \texttt{Facts} $5-8$. But $(a_1,a_2)=(1,3)$ implies $\sum_{i=1}^{4}|V_i|=7\neq 2n$, which contradicts \texttt{Fact 1}. Thus $\ell\geq 5$.

\item $n=6$. Let $\ell=5$. Then $(a_1,a_2)=(1,4)$, $(2,2)$, $(2,3)$. Because $2a_1+a_2\geq 6$, $4\leq a_1+a_2\leq 5$, $1 \leq a_1 \leq 2$ and $\max\{0,6-2a_1,4-a_1\}\leq a_2\leq \min\{ 4, 5-a_1\}$ by \texttt{Facts} $5-8$. Since $(a_1,a_2)=(1,4)$, $(2,3)$ imply $\sum_{i=1}^{5}|V_i|\neq 2n$ and $(a_1,a_2)=(2,2)$ implies $|V_1|>\alpha=4$, which contradict \texttt{Fact 1}, we have $\ell\geq 6$.

\item $n=7$. Let $\ell=6$. Then $(a_1,a_2)=(1,5)$, $(2,3)$, $(2,4)$, $(3,1)$, $(3,2)$, $(3,3)$. Because $2a_1+a_2\geq 7$, $4\leq a_1+a_2\leq 6$, $1 \leq a_1 \leq 3$ and $\max\{0,7-2a_1,4-a_1\}\leq a_2\leq \min\{ 5, 6-a_1\}$ by \texttt{Facts} $5-8$. Since $(a_1,a_2)=(1,5)$, $(2,4)$, $(3,3)$ imply $\sum_{i=1}^{6}|V_i|\neq 2n$ and $(a_1,a_2)=(2,3)$, $(3,1)$, $(3,2)$ imply $|V_1|>\alpha=4$, which contradict \texttt{Fact 1}, we have $\ell\geq 7$.

\item $n=8$. Let $\ell=7$. Then  $(a_1,a_2)=(1,6)$, $(2,4)$, $(2,5)$, $(3,2)$, $(3,3)$, $(3,4)$, $(4,1)$, $(4,2)$, $(4,3)$. Because $2a_1+a_2\geq 8$, $5\leq a_1+a_2\leq 7$, $1 \leq a_1 \leq 4$ and $\max\{0,8-2a_1,5-a_1\}\leq a_2\leq \min\{ 6, 7-a_1\}$ by \texttt{Facts} $5-8$. Since $(a_1,a_2)=(1,6)$, $(2,5)$, $(3,4)$, $(4,3)$ imply $\sum_{i=1}^{7}|V_i|\neq 2n$ and $(a_1,a_2)=(2,4)$, $(3,3)$, $(4,2)$ imply $|V_1|>\alpha=5$, which contradict \texttt{Fact 1}, and $(a_1,a_2)\neq(3,2)$ by \texttt{Fact 10},  we have $(a_1,a_2)=(4,1)$, that is, $(|V_1|,\cdots,|V_{7}|) = (5,5,2,1,1,1,1)$. Since the number of isolate vertices of $T(C_{8})[V_4\cup \cdots \cup V_{7}]$ is at most 1 (by \texttt{Fact 9}) and so $T(C_{8})[V_4\cup \cdots \cup V_{7}]\cong K_1\cup \mathcal{H}_3$, or $\mathcal{H}_4$, we have 
%------------------
\begin{equation*}
\begin{array}{lll}
|\bigcup_{i=3}^{7} CN(V_i)| & \leq & |CN(V_3)|+|\bigcup_{i=4}^{7} CN(V_i)| \\
 &\leq &2+ \max\{1\times 3 +9,2\times 7\} \\
 &= &|V(T(C_8)|.
\end{array}
\end{equation*}
%------------------
Since $|\bigcup_{i=3}^{7} CN(V_i)|=|V(T(C_8)|$ if and only if  $T(C_{8})[V_4\cup \cdots \cup V_{7}]\cong 2K_2$, by assumptions $T(C_{8})[V_4\cup V_{5}]\cong K_2$  and $T(C_{8})[V_6\cup V_{7}]\cong K_2$, as you can see in Figure \ref{TDTCC8}, we have
 \begin{itemize}
\item[$\circ$] $|CN(V_3)|=2$ and $|CN(V_4)\cup CN(V_5))|=|(CN(V_6)\cup CN(V_7)|=7$,
\item[$\circ$] $CN(V_3)\cap (\bigcup_{i=4}^{7} CN(V_i))=\emptyset$ and $(CN(V_4)\cup CN(V_5))\cap (CN(V_6)\cup 
CN(V_7))=\emptyset$,
\item[$\circ$] $|V_3 \cap (\bigcup_{i=4}^{5} CN(V_i))|=1$ and $|V_3 \cap (\bigcup_{i=6}^{7} CN(V_i))|=1$.
\end{itemize}
But then the induced subgraph $T(C_{8})[V_1\cup V_{2}]$ with chromatic number 2 contains a clique $K_3$ as a subgraph, which is not possible. Thus $\ell \geq 8$, as desired. 
\end{itemize}
%--------------------------------------------
\begin{figure}[ht]\label{TDTCC8}
\centerline{\includegraphics[width=4cm, height=3.8cm]{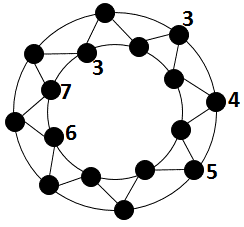}}
\vspace*{0cm}
\caption{The illustration of $T(C_8)$ when $(a_1,a_2)=(4,1)$ and $T(C_{8})[V_4\cup \cdots \cup V_{7}]\cong 2K_2$.} \label{TDTCC8}
\end{figure}
%-------------------------------------------------
\vspace{0.1cm}

\textbf{Case 2.}  $9 \leq n \leq 19$ except $n=15,18$.

\begin{itemize}

\item $n=9$. Let $\ell=7$. Then $(a_1,a_2)=(2,5)$, $(3,3)$, $(3,4)$, $(4,2)$, $(4,3)$. Because $2a_1+a_2\geq 9$, $6 \leq a_1+a_2\leq 7$ , $2 \leq a_{1} \leq 4$ and $\max\{0,9-2a_1,6-a_1\}\leq a_2\leq \min\{ 5, 7-a_1\}$ by \texttt{Facts} $5-8$. Since $(a_1,a_2)=(2,5)$, $(3,4)$, $(4,3)$ imply $\sum_{i=1}^{7}|V_i|\neq 2n$ and $(a_1,a_2)=(4,2)$, $(3,3)$ imply $|V_1|>\alpha=6$, which contradict \texttt{Fact 1}, we have $\ell\geq 8$. Now since the coloring function, shown in Figure \ref{TDTC7},
\begin{equation*}
(\{v_{1}, v_{6}, v_{8}, e_{23}, e_{45}\},\{v_{7}, v_{9},e_{12}, e_{34}, e_{56}\},\{v_{2}, e_{19}\},\{v_{3}\},\{v_{4}\},\{v_{5},e_{67}\},\{e_{78}\},\{e_{89}\}),
\end{equation*}
is a TDC of $T(C_9)$, we have $\chi^{tt}_d (C_9)= 8$.
%---------------------------
\begin{figure}[ht]
\centerline{\includegraphics[width=12cm, height=4.5cm]{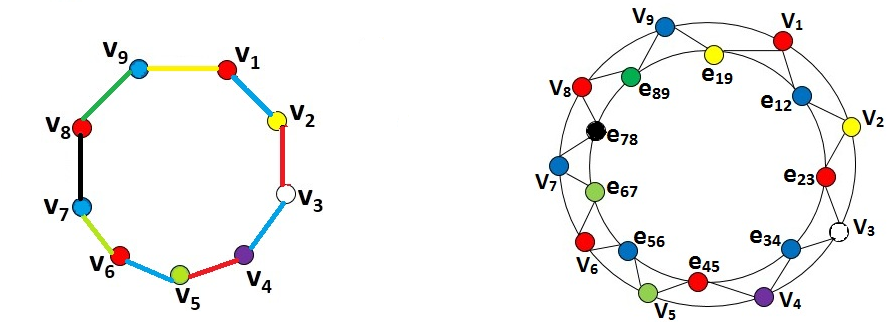}}
\vspace*{-0.25cm}
\caption{ A min-TDTC of $C_{9}$ (left) and the corresponding min-TDC of $T(C_{9})$ (right).}\label{TDTC7}
\end{figure}

%-----------------------------------

\item $n=10$. Let $\ell=8$. Then $(a_1,a_2)=(2,6)$, $(3,4)$, $(3,5)$, $(4,2)$, $(4,3)$, $(4,4)$, $(5,1)$, $(5,2)$, $(5,3)$. Because $2a_1+a_2\geq 10$, $6 \leq a_1+a_2\leq 8$ ,$2 \leq a_{1} \leq 5$ and $\max\{0,10-2a_1,6-a_1\}\leq a_2\leq \min\{ 6, 8-a_1\}$ by \texttt{Facts} $5-8$. Since $(a_1,a_2)=(2,6)$, $(3,5)$, $(4,4)$, $(5,3)$ imply $\sum_{i=1}^{8}|V_i|\neq 2n$ and $(a_1,a_2)=(3,4)$, $(4,3)$, $(5,1)$, $(5,2)$ imply $|V_1|>\alpha=6$, which contradict \texttt{Fact 1}, and $(a_1,a_2)\neq (4,2)$ by \texttt{Fact 10}. Thus $\ell\geq 9$, and in fact $\chi^{tt}_d (C_{10})=9$ by  (\ref{chi^{t}_d (T(C_n))  leq gamma_t(T(C_n))+3}).
 %-----------------------------------------
\item $n=11$. Let $\ell=9$. Then $(a_1,a_2)=(2,7)$, $(3,5)$, $(3,6)$, $(4,3)$, $(4,4)$, $(4,5)$, $(5,2)$, $(5,3)$, $(5,4)$, $(6,1)$, $(6,2)$, $(6,3)$.  Because $2a_1+a_2\geq 11$, $7 \leq a_1+a_2\leq 9$ ,$2 \leq a_{1} \leq 6$ and $\max\{0,11-2a_1,7-a_1\}\leq a_2\leq \min\{ 7, 9-a_1\}$ by \texttt{Facts} $5-8$.  Since $(a_1,a_2)=(2,7)$, $(3,6)$, $(4,5)$, $(5,4)$, $(6,3)$ imply $\sum_{i=1}^{9}|V_i|\neq 2n$ and $(a_1,a_2)=(3,5)$, $(4,4)$, $(5,3)$, $(6,2)$ imply $|V_1|>\alpha=7$, which contradict \texttt{Fact 1}, and $(a_1,a_2)\neq(4,3)$ by \texttt{Fact 10}, we assume $(a_1,a_2)=(5,2)$, $(6,1)$.
\begin{itemize}

\item[$\circ$] $(a_1,a_2)=(5,2)$. Then, since the number of isolate vertices of $T(C_{11})[V_5\cup \cdots \cup V_{9}]$ is at most 2, (by \texttt{Fact 9}) and so $T(C_{11})[V_5\cup \cdots \cup V_{9}]\cong \overline{K_2}\cup H_{3}$, $K_1\cup H_{4}$ or $H_{5}$, we have
\begin{equation*}
\begin{array}{lllll}
|\bigcup_{i=3}^{9} CN(V_i)| &\leq & |\bigcup_{i=3}^{4} CN(V_i)|+|\bigcup_{i=5}^{9} CN(V_i)|\\
& \leq & 4+\max\{2\times 3+ 9,1\times 3+ 2\times 7,7+9\}\\
& < &|V(T(C_{11}))|,
\end{array}
\end{equation*}
a contradiction with (\ref{CN(V_i)=V}). 

\item[$\circ$] $(a_1,a_2)=(6,1)$. Then the number of isolate vertices of $T(C_{11})[V_4\cup \cdots \cup V_{9}]$ is at most 1 (by \texttt{Fact 9}). If the subgraph has one isolate vertex, then $T(C_{11})[V_4\cup \cdots \cup V_{9}]\cong K_1\cup \mathcal{H}_5$ which implies
\begin{equation*}
\begin{array}{lllll}
|\bigcup_{i=3}^{9} CN(V_i)| &\leq & | CN(V_3)|+|\bigcup_{i=4}^{9} CN(V_i)|\\
& \leq & 2+(3 +7+9)\\
& < &|V(T(C_{11}))|,
\end{array}
\end{equation*}
a contradiction with (\ref{CN(V_i)=V}). So we assume $T(C_{11})[V_4\cup \cdots \cup V_{9}]$ has no isolate vertex. Then $CN(V_i)\cap CN(V_j)\neq \emptyset$ for some $3\leq i < j \leq 9$. Since obviousely $|CN(V_i)\cap CN(V_j)|\geq 2$ implies $|\bigcup_{i=3}^{9} CN(V_i)|< |V(T(C_{11}))|$, we assume $|CN(V_i)\cap CN(V_j)|=1$ for some $3\leq i < j \leq 9$, and so $|\bigcup_{i=3}^{9} CN(V_i)|=|V(T(C_{11}))|$. But then the subgraph $T(C_{11})[V_1\cup V_2]$ with chromatic number 2 contains $K_3$ as a subgraph, which is not possible.  

\end{itemize}
Thus $\ell\geq 10$, and in fact $\chi^{tt}_d (C_{11})=10$ by  (\ref{chi^{t}_d (T(C_n))  leq gamma_t(T(C_n))+3}).

%--------------------------------------------------------------------

\item $n=12$. Let $\ell=9$. Then $(a_1,a_2)=(3,6)$, $(4,4)$, $(4,5)$, $(5,3)$, $(5,4)$, $(6,2)$, $(6,3)$. Because $2a_1+a_2\geq 12$, $8 \leq a_1+a_2\leq 9$ , $3 \leq a_{1} \leq 6$ and $\max\{0,12-2a_1,8-a_1\}\leq a_2\leq \min\{ 6, 9-a_1\}$ by \texttt{Facts} $5-8$. Since $(a_1,a_2)=(3,6)$, $(4,5)$, $(5,4)$, $(6,3)$ imply $\sum_{i=1}^{9}|V_i|\neq 2n$ and $(a_1,a_2)=(4,4)$, $(5,3)$, $(6,2)$ imply $|V_1|>\alpha=8$, which contradict \texttt{Fact 1}, we have $\ell\geq 10$. Now since $f=(V_{1},V_{2},\cdots,V_{10})$ is a TDC of $T(C_{12})$ where $V_{1}=\{v_{4}, v_{6}, v_{11}, e_{1(12)}, e_{23}, e_{78}, e_{9(10)}\}$, $V_{2}=\{v_{5}, v_{10}, v_{12}, e_{12}, e_{34}, e_{67}, e_{89}\}$, $V_{3}=\{v_{2}\}$, $V_{4}=\{v_{1},v_{3}\}$, $V_{5}=\{e_{45}\}$, $V_{6}=\{e_{56}\}$, $V_{7}=\{v_{7},v_{9}\}$, $V_{8}=\{v_{8}\}$, $V_{9}=\{e_{(10)(11)}\}$, $V_{10}=\{e_{(11)(12)}\}$, we have $\chi^{tt}_d (C_{12})=10$.

%--------------------------------------------------------------------------------------------------
\item $n=13$. Let $\ell=10$. Then $(a_1,a_2)=(3,7)$,  $(4,5)$, $(4,6)$, $(5,3)$, $(5,4)$, $(5,5)$, $(6,2)$, $(6,3)$, $(6,4)$, $(7,1)$, $(7,2)$,$(7,3)$. Because $2a_1+a_2\geq 13$, $8 \leq a_1+a_2\leq 10$ , $3 \leq a_{1} \leq 7$ and $\max\{0,13-2a_1,8-a_1\}\leq a_2\leq \min\{ 7, 10-a_1\}$ by \texttt{Facts} $5-8$. Since $(a_1,a_2)=(3,7)$, $(4,6)$, $(5,5)$, $(6,4)$, $(7,3)$ imply $\sum_{i=1}^{10}|V_i|\neq 2n$, and $(a_1,a_2)=(4,5)$, $(5,4)$, $(6,3)$,  $(7,1)$, $(7,2)$ imply $|V_1|>\alpha=8$, which contradict \texttt{Fact 1}, and $(a_1,a_2)\neq (5,3)$ by \texttt{Fact 10}, we assume $(a_1,a_2)=(6,2)$. Then, since the number of isolate vertices of $T(C_{13})[V_5\cup \cdots \cup V_{10}]$ is at most 2 (by \texttt{Fact 9}) and so $T(C_{13})[V_5\cup \cdots \cup V_{10}]\cong \overline{K_2}\cup \mathcal{H}_4$, $\overline{K_1}\cup \mathcal{H}_5$ or $\mathcal{H}_6$,  we have
%-------------------------
\begin{equation*}
\begin{array}{lllll}
|\bigcup_{i=3}^{10} CN(V_i)| &\leq & |\bigcup_{i=3}^{4} CN(V_i)|+|\bigcup_{i=5}^{10} CN(V_i)|\\
& \leq & 4+\max\{2\times 3 +2\times 7,3+ 7+9,3 \times 7\}\\
& < &|V(T(C_{13}))|,
\end{array}
\end{equation*}
%-------------------------
a contradiction with (\ref{CN(V_i)=V}). Thus $\ell\geq 11$, and in fact $\chi^{tt}_d (C_{13})=11$ by  (\ref{chi^{t}_d (T(C_n))  leq gamma_t(T(C_n))+3}).

%------------------------------------------------------
\item $n=14$. Let $\ell=10$. Then $(a_1,a_2)=(4,6)$,  $(5,4)$, $(5,5)$, $(6,2)$, $(6,3)$, $(6,4)$, $(7,1)$, $(7,2)$, $(7,3)$. Because $2a_1+a_2\geq 14$, $8 \leq a_1+a_2\leq 10$ , $4 \leq a_{1} \leq 7$ and $\max\{0,14-2a_1,8-a_1\}\leq a_2\leq \min\{ 6, 10-a_1\}$ by  \texttt{Facts} $5-8$. Since $(a_1,a_2)=(4,6)$, $(5,5)$, $(6,4)$, $(7,3)$ imply $\sum_{i=1}^{10}|V_i|\neq 2n$ and $(a_1,a_2)=(5,4)$, $(6,3)$,  $(7,1)$, $(7,2)$ imply $|V_1|>\alpha=9$, which contradict \texttt{Fact 1}, and $(a_1,a_2)\neq (6,2)$ by \texttt{Fact 10}, we have $\ell\geq 11$, and in fact $\chi^{tt}_d (C_{14})=11$ by  (\ref{chi^{t}_d (T(C_n))  leq gamma_t(T(C_n))+3}). 
%-------------------------------------------------------------------------------
\item $n=16$. Let $\ell=12$. Then $(a_1,a_2)=(4,8)$,   $(5,6)$, $(5,7)$,  $(6,4)$, $(6,5)$, $(6,6)$, $(7,3)$, $(7,4)$, $(7,5)$, $(8,2)$, $(8,3)$, $(8,4)$, $(9,1)$, $(9,2)$,$(9,3)$. Because $2a_1+a_2\geq 16$, $10 \leq a_1+a_2\leq 12$ , $4 \leq a_{1} \leq 9$ and $\max\{0,16-2a_1,10-a_1\}\leq a_2\leq \min\{ 8, 12-a_1\}$ by \texttt{Facts} $5-8$. Since $(a_1,a_2)=(4,8)$, $(5,7)$, $(6,6)$, $(7,5)$, $(8,4)$, $(9,3)$ imply $\sum_{i=1}^{12}|V_i|\neq 2n$ and  $(a_1,a_2)=(5,6)$,  $(6,5)$,  $(7,4)$,  $(8,3)$, $(9,1)$,$(9,2)$ imply $|V_1|>\alpha=10$, which contradict \texttt{Fact 1}, and $(a_1,a_2)\neq (6,4)$ by \texttt{Fact 10}, we assume $(a_1,a_2)=(7,3)$, $(8,2)$.

\begin{itemize}

\item[$\circ$]  $(a_1,a_2)=(7,3)$. Then, since the number of isolate vertices of $T(C_{16})[V_6\cup \cdots \cup V_{12}]$ is at most 3 (by \texttt{Fact 9}) and so $T(C_{16})[V_6\cup \cdots \cup V_{12}]\cong \overline{K_3} \cup \mathcal{H}_4$, $\overline{K_2}\cup \mathcal{H}_5$, $\overline{K_1}\cup \mathcal{H}_6$ or $\mathcal{H}_7$, we have
%-------------------
\begin{equation*}
\begin{array}{lllll}
|\bigcup_{i=3}^{12} CN(V_i)| &\leq & |\bigcup_{i=3}^{6} CN(V_i)|+|\bigcup_{i=7}^{12} CN(V_i)|\\
& \leq & 6+\max\{3\times 3+ 2\times 7,2\times 3 +7+9,3+ 3\times 7,2 \times 7+9 \}\\
& < &|V(T(C_{16}))|,
\end{array}
\end{equation*}
%-------------------
a contradiction with (\ref{CN(V_i)=V}). 

\item[$\circ$]  $(a_1,a_2)=(8,2)$. Then, since the number of isolate vertices of $T(C_{16})[V_5\cup \cdots \cup V_{12}]$ is at most 2 (by \texttt{Fact 9}) and so $T(C_{16})[V_5\cup \cdots \cup V_{12}]\cong \overline{K_2}\cup \mathcal{H}_6$, $\overline{K_1}\cup \mathcal{H}_7$ or $\mathcal{H}_8$, we have
%-------------------
\begin{equation*}
\begin{array}{lllll}
|\bigcup_{i=3}^{12} CN(V_i)| &\leq & |\bigcup_{i=3}^{4} CN(V_i)|+|\bigcup_{i=5}^{12} CN(V_i)|\\
& \leq & 4+\max\{2\times 3+ 3\times 7,3+2\times 7+9,4\times 7 \}\\
& = &|V(T(C_{16}))|.
\end{array}
\end{equation*}
%-------------------
Since $|\bigcup_{i=3}^{12} CN(V_i)|=|V(T(C_{16}))|$ if and only if $T(C_{16})[V_5\cup \cdots \cup V_{12}] \cong 4K_2$, we may assume $T(C_{16})[V_5\cup \cdots \cup V_{12}][V_{2i-1} \cup V_{2i}]\cong K_2$ for  $3\leq i \leq 6$. Then
 \begin{itemize}
\item[$\circ$] $|CN(V_i)|=2$ for  $3\leq i \leq 4$,
\item[$\circ$] $CN(V_i)\cap (\bigcup_{j=5}^{12} CN(V_j))=\emptyset$ for  $3\leq i \leq 4$,
\item[$\circ$] $|CN(V_{2i-1})\cup CN(V_{2i})|=7$ for $3\leq i \leq 6$,
\item[$\circ$] $(CN(V_{2i-1})\cup CN(V_{2i}))\cap (CN(V_{2j-1})\cup CN(V_{2j}))=\emptyset$ for $3\leq i<j \leq 6$,
\item[$\circ$] $1 \leq |V_3 \cap (\bigcup_{j=5}^{12} CN(V_j))| \leq 2$ and $1 \leq |V_4 \cap (\bigcup_{j=5}^{12} CN(V_j))| \leq 2$. 
\end{itemize}
But this implies that $V_3$, $\cdots$, $V_{12}$ has one of the positions shown in Figure \ref{TDTCC163}. Then, since the induced subgraph $T(C_{16})[V_1\cup V_{2}]$ with chromatic number 2 contains a clique $K_3$ as a subgraph, we reach to contradiction. 

\end{itemize}
%--------------------------------------------
\begin{figure}[ht]\label{TDTCC163}
\centerline{\includegraphics[width=13cm, height=3.2cm]{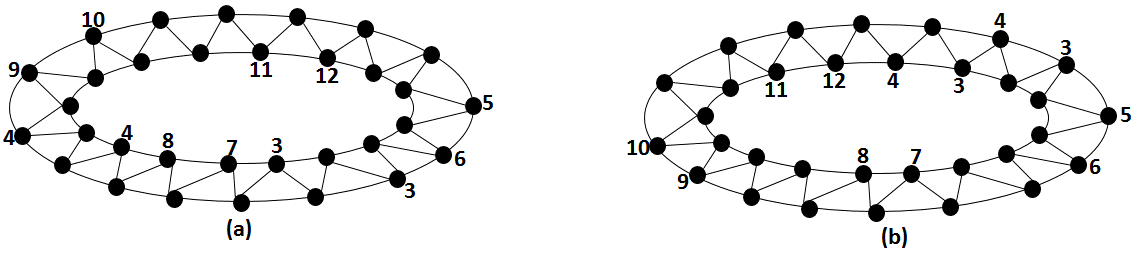}}
\vspace*{0cm}
\caption{The illustration of $T(C_{16})$ when $(a_1,a_2)=(8,2)$ and $T(C_{16})[V_5\cup \cdots \cup V_{12}]\cong 4K_2$.}  \label{TDTCC163}
\end{figure}
%---------------------------------------------------------
%-------------------------------------------------------------------
\item $n=17$. Let $\ell=12$. Then $(a_1,a_2)=(5,7)$,  $(6,5)$, $(6,6)$, $(7,3)$, $(7,4)$, $(7,5)$, $(8,2)$,$(8,3)$,$(8,4)$,$(9,1)$,$(9,2)$,$(9,3)$. Because $2a_1+a_2\geq 17$, $10 \leq a_1+a_2\leq 12$ and $5 \leq a_{1} \leq 9$ and $\max\{0,17-2a_1,10-a_1\}\leq a_2\leq \min\{ 7, 12-a_1\}$ by \texttt{Facts} $5-8$. Since $(a_1,a_2)=(5,7)$, $(6,6)$, $(7,5)$, $(8,4)$, $(9,3)$ imply $\sum_{i=1}^{12}|V_i|\neq 2n$, and  $(a_1,a_2)=(6,5)$,  $(7,4)$,  $(8,3)$, $(9,1)$,$(9,2)$ imply $|V_1|>\alpha=11$, which contradict \texttt{Fact 1},  and $(a_1,a_2)\neq (7,3)$ by \texttt{Fact 10}, we assume $(a_1,a_2)=(8,2)$.  Then, since the number of isolate vertices of $T(C_{17})[V_5\cup \cdots \cup V_{12}]$ is at most 2 (by \texttt{Fact 9}) and so $T(C_{17})[V_5\cup \cdots \cup V_{12}]\cong \overline{K_2} \cup \mathcal{H}_6$, $\overline{K_1} \cup \mathcal{H}_7$ or $\mathcal{H}_8$, we have
%---------------
\begin{equation*}
\begin{array}{lllll}
|\bigcup_{i=3}^{12} CN(V_i)| &\leq & |\bigcup_{i=3}^{4} CN(V_i)|+|\bigcup_{i=5}^{12} CN(V_i)|\\
& \leq & 4+\max\{2\times 3+ 3\times 7,3 +2\times 7+9,4 \times 7 \}\\
& < &|V(T(C_{17}))|,
\end{array}
\end{equation*}
%---------------
a contradiction with (\ref{CN(V_i)=V}). 

Thus $\ell\geq 13$, and in fact $\chi^{tt}_d (C_{17})=13$ by  (\ref{chi^{t}_d (T(C_n))  leq gamma_t(T(C_n))+3}).
%-------------------------------------------------------------------
\item $n=19$. Let $\ell=14$. Then $(a_1,a_2)=(5,9)$,  $(6,7)$, $(6,8)$, $(7,5)$, $(7,6)$, $(7,7)$,  $(8,4)$, $(8,5)$, $(8,6)$, $(9,3)$, $(9,4)$, $(9,5)$,  $(10,2)$, $(10,3)$, $(10,4)$, $(11,1)$, $(11,2)$, $(11,3)$. Because $2a_1+a_2\geq 19$, $12 \leq a_1+a_2\leq 14$ , $5 \leq a_{1} \leq 11$ and $\max\{0,19-2a_1,12-a_1\}\leq a_2\leq \min\{ 9, 14-a_1\}$ by \texttt{Facts} $5-8$. Since $(a_1,a_2)=(5,9)$, $(6,8)$, $(7,7)$, $(8,6)$, $(9,5)$, $(10,4)$, $(11,3)$ imply $\sum_{i=1}^{14}|V_i|\neq 2n$ and  $(a_1,a_2)=(6,7)$,  $(7,6)$, $(8,5)$, $(9,4)$,  $(10,3)$,  $(11,1)$, $(11,2)$ imply $|V_1|>\alpha=12$, which contradict \texttt{Fact 1}, and $(a_1,a_2)\neq(7,5)$ by \texttt{Fact 10}, we assume $(a_1,a_2)=(8,4)$, $(9,3)$,  $(10,2)$.

\begin{itemize}

\item[$\circ$]   $(a_1,a_2)=(8,4)$. Then, since the number of isolate vertices of $T(C_{19})[V_7\cup \cdots \cup V_{14}]$ is at most 4 (by \texttt{Fact 9}) and so $T(C_{19})[V_7\cup \cdots \cup V_{14}]\cong \overline{K_4}\cup \mathcal{H}_4$, $\overline{K_3}\cup \mathcal{H}_5$, $\overline{K_2}\cup \mathcal{H}_6$, $K_1\cup \mathcal{H}_7$ or $\mathcal{H}_8$, we have
%---------------
\begin{equation*}
\begin{array}{lllll}
|\bigcup_{i=3}^{14} CN(V_i)| &\leq & |\bigcup_{i=3}^{6} CN(V_i)|+|\bigcup_{i=7}^{14} CN(V_i)|\\
& \leq & 8+\max\{ 4\times 3 +2\times 7,3\times 3 +7+9,2\times 3 + 3\times 7,\\
& &3+ 2\times 7+9,4 \times 7\}\\
& < &|V(T(C_{19}))|,
\end{array}
\end{equation*}
%---------------
a contradiction with (\ref{CN(V_i)=V}).  

\item[$\circ$]  $(a_1,a_2)=(9,3)$. Then, since the number of isolate vertices of $T(C_{19})[V_6\cup \cdots \cup V_{14}]$ is at most 3 (by \texttt{Fact 9}) and so $T(C_{19})[V_6\cup \cdots \cup V_{14}]\cong \overline{K_3}\cup \mathcal{H}_6$, $\overline{K_2}\cup \mathcal{H}_7$, $K_1\cup \mathcal{H}_8$ or $\mathcal{H}_9$, we have
%---------------
\begin{equation*}
\begin{array}{lllll}
|\bigcup_{i=3}^{14} CN(V_i)| &\leq & |\bigcup_{i=3}^{5} CN(V_i)|+|\bigcup_{i=6}^{14} CN(V_i)|\\
& \leq & 6+\max\{ 3\times 3 +3\times 7, 2\times 3 + 2\times 7+9, 3+ 4\times 7,\\
&& 3 \times 7+9   \}\\
& < &|V(T(C_{19}))|,
\end{array}
\end{equation*}
%---------------
a contradiction with (\ref{CN(V_i)=V}). 

\item[$\circ$]  $(a_1,a_2)=(10,2)$. Then, since the number of isolate vertices of $T(C_{19})[V_5\cup \cdots \cup V_{14}]$ is at most 2 (by \texttt{Fact 9}), we have $T(C_{19})[V_5\cup \cdots \cup V_{14}]\cong \overline{K_2}\cup \mathcal{H}_8$, $K_1\cup \mathcal{H}_9$ or $\mathcal{H}_{10}$. Let $T(C_{19})[V_5\cup \cdots \cup V_{14}]\cong \overline{K_2}\cup \mathcal{H}_8$. Then
%---------------
\begin{equation}
\label{n=19,(10,2)}
\begin{array}{lllll}
|\bigcup_{i=3}^{14} CN(V_i)| &\leq & |\bigcup_{i=3}^{4} CN(V_i)|+|\bigcup_{i=5}^{14} CN(V_i)|\\
& \leq & 4+2\times 3 + 4\times 7\\
& = &|V(T(C_{19}))|. 
\end{array}
\end{equation}

%--------------------------------
\begin{figure}[ht]\label{TDTCC191}
\centerline{\includegraphics[width=9cm, height=3.8cm]{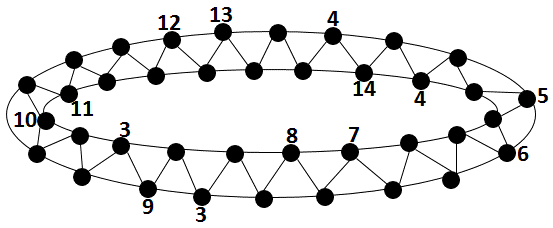}}
\vspace*{0cm}
\caption{The illustration of $T(C_{19})$ when $(a_1,a_2)=(10,2)$, $T(C_{19})[V_5\cup \cdots \cup V_{14}]\cong \overline{K_2}\cup \mathcal{H}_8$ and $|\bigcup_{i=3}^{14} CN(V_i)|=38$.}  
 \label{TDTCC191}
\end{figure}
%---------------------------------
Since $|\bigcup_{i=3}^{14} CN(V_i)|=38$ if and only if equality holds in (\ref{n=19,(10,2)}), $V_3$, $\cdots$, $V_{14}$ are in the position shown in Figure \ref{TDTCC191}. But then we reach to this contradiction that $T(P_{19})[V_1\cup V_2]$ with chromatic number 2 contains $K_3$ as a subgraph.  Now let $T(C_{19})[V_5\cup \cdots \cup V_{14}]\cong K_1\cup \mathcal{H}_9$. Then
%---------------
\begin{equation*}
\begin{array}{lllll}
|\bigcup_{i=3}^{14} CN(V_i)| &\leq & |\bigcup_{i=3}^{4} CN(V_i)|+|\bigcup_{i=5}^{14} CN(V_i)|\\
& \leq & 4+1\times 3+ 3\times 7 +9\\
& < &|V(T(C_{19}))|,
\end{array}
\end{equation*}
%---------------
a contradiction with (\ref{CN(V_i)=V}). Finally let $T(C_{19})[V_5\cup \cdots \cup V_{14}]\cong \mathcal{H}_{10}$, which implies
%---------------
\begin{equation}
\label{n=19,(10,2)iso=0}
\begin{array}{lllll}
|\bigcup_{i=3}^{14} CN(V_i)| &\leq & |\bigcup_{i=3}^{4} CN(V_i)|+|\bigcup_{i=5}^{14} CN(V_i)|\\
& \leq & 4+5 \times 7\\
& = & 39.
\end{array}
\end{equation}
%---------------
Since $|\bigcup_{i=3}^{14} CN(V_i)|=38$ if and only if $\mathcal{H}_{10}\cong 5K_2$, by assumptions $\mathcal{H}_{10}[V_{2i-1} \cup V_{2i}]\cong K_2$ for  $3\leq i \leq 7$, we have
\begin{itemize}
\item[$\circ$] $|CN(V_i)|=2$ for  $3\leq i \leq 4$,
\item[$\circ$] $CN(V_i)\cap (\bigcup_{j=5}^{14} CN(V_j))=\emptyset$ for  $3\leq i \leq 4$,
\item[$\circ$] $|CN(V_{2i-1})\cup CN(V_{2i})|=7$ for $3\leq i \leq 7$,
\item[$\circ$] $|(CN(V_{2i-1})\cup CN(V_{2i}))\cap (CN(V_{2j-1})\cup CN(V_{2j}))| \leq 1$ for $3\leq i<j \leq 7$,
 \item[$\circ$] $1 \leq |V_3 \cap (\bigcup_{j=5}^{14} CN(V_j))|\leq 2$ and $1 \leq |V_4 \cap (\bigcup_{j=5}^{14} CN(V_j))| \leq 2$ for some $3\leq j\neq k \leq 6$.
%---------------------------
\end{itemize}
%-------------------
That is $V_3$, $\cdots$, $V_{14}$ are in the position shown in Figure \ref{TDTCC194}. But then we reach to this contradiction that $T(P_{19})[V_1\cup V_2]$ with chromatic number 2 contains $K_3$ as a subgraph. 
\end{itemize}
Thus $\ell\geq 15$, and in fact $\chi^{tt}_d (C_{19})=15$ by  (\ref{chi^{t}_d (T(C_n))  leq gamma_t(T(C_n))+3}). 
%----------------------------------------------------------
\end{itemize}
\end{proof}
%---------------------------
\begin{figure}[ht]\label{TDTCC194}
\centerline{\includegraphics[width=14cm, height=3cm]{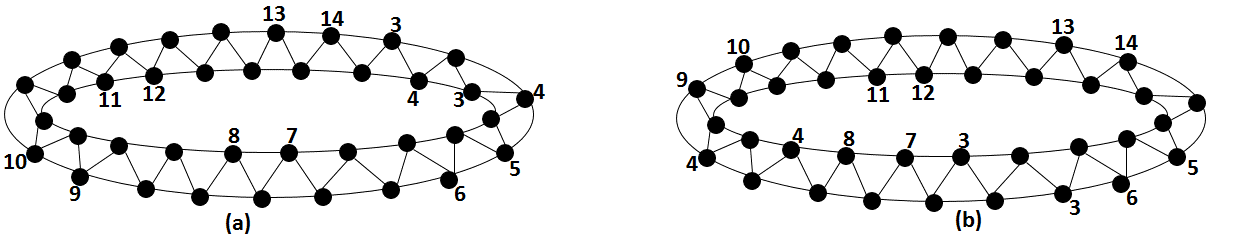}}
\vspace*{0cm}
\caption{The illustration of $T(C_{19})$ when $(a_1,a_2)=(10,2)$, $T(C_{19})[V_5\cup \cdots \cup V_{14}]\cong 5K_2$ and $|\bigcup_{i=3}^{14} CN(V_i)|=38$.}  
 \label{TDTCC194}
\end{figure}
%----------------------------------------------------------------------

%_______________________________________
\section{Paths}
Here, we calculate the total dominator total chromatic number of paths. First we recall a proposition and calculate the mixed indepence number of a path. 

%------------------------------------------------------------------------
%\textcolor{blue}{\begin{prop} \emph{\cite{Kaz2015}}\label{chi_dt P_n} Let $P_n$ be a path of order $n\geq 2$. Then
%\begin{equation*}
%\chi_d^t(P_n)=\left\{
%\begin{array}{ll}
%2\lceil \frac{n}{3}\rceil -1   & \mbox{if } n\equiv 1\pmod{3},\\
%2\lceil \frac{n}{3}\rceil & \mbox{otherwise}.
%\end{array}
%\right.
%\end{equation*}
%\end{prop}
%\textcolor{red}{Proposition not true????Beacause...and we replace following Proposition:}}
%-------------------------------------------------
%\begin{prop} \emph{\cite{Hen2015}}\label{gamma_{t}(P_n)} For $n\geq 3$,  $\gamma_{t}(P_n)=\lfloor n/2\rfloor+\lceil n/4 \rceil-\lfloor n/4\rfloor$.\end{prop}

%----------------------------------------------------
%\begin{prop} \emph{\cite{Hen2015}}\label{chi_dt P_nn} Let $P_n$ be a path of order $n\geq 2$. Then
%\begin{equation*}
%\chi_d^t(P_n)=\left\{
%\begin{array}{ll}
%\lfloor n/2\rfloor+\lceil n/4 \rceil-\lfloor n/4\rfloor       & \mbox{for } n \in \{2,3,6\},\\
%\lfloor n/2\rfloor+\lceil n/4 \rceil-\lfloor n/4\rfloor+1   & \mbox{for } n \in \{4,5,7,9,10,11,14\},\\
%\lfloor n/2\rfloor+\lceil n/4 \rceil-\lfloor n/4\rfloor+2   & \mbox{otherwise}.
%\end{array}
%\right.
%\end{equation*}
%\end{prop}
%-----------------------------------------------------------------
\begin{prop} \emph{\cite{KK2017}}
\label{gamma_t T(P_n)} For any path $P_n$ of order $n\geq 2$,
\begin{equation*}
\gamma_{tm}(P_n)=\left\{
\begin{array}{ll}
4\lceil n/7\rceil -3 & \mbox{if }n\equiv 1 \pmod{7},\\
4\lceil n/7\rceil -2 & \mbox{if }n\equiv 2,3,4 \pmod{7},\\
4\lceil n/7\rceil -1 & \mbox{if }n\equiv 5 \pmod{7},\\
4\lceil n/7\rceil  & \mbox{if }n\equiv 0,6 \pmod{7},
\end{array}\right.
\end{equation*}
%--------------------
or equivalently
%----------------
\begin{equation*}
\gamma_{tm}(P_n)=\left\{
\begin{array}{ll}
\lfloor \frac{4n}{7}\rfloor & \mbox{if }n\equiv 4 \pmod{7},\\
\lceil \frac{4n}{7}\rceil     & \mbox{if }n\not\equiv 4 \pmod{7}.
\end{array}\right.
\end{equation*}
\end{prop}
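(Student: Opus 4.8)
The plan is to invoke Theorem~\ref{gamma_{tm}(G)=gamma_{t}(T(G))} to turn the mixed problem on $P_n$ into an ordinary total domination problem on $T(P_n)$, and then to exploit the extremely rigid structure of that graph. Writing the vertices of $T(P_n)$ in the linear order $u_1=v_1,\ u_2=e_{12},\ u_3=v_2,\ \dots,\ u_{2n-2}=e_{(n-1)n},\ u_{2n-1}=v_n$, one checks directly from the incidence/adjacency rules that $u_j$ and $u_k$ are adjacent in $T(P_n)$ precisely when $1\le|j-k|\le 2$; in other words $T(P_n)\cong P_{2n-1}^{\,2}$, a linear strip of triangles on $m:=2n-1$ vertices. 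Hence it suffices to determine $\gamma_t(P_m^{\,2})$ and then substitute $m=2n-1$.

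For the upper bound I would exhibit an explicit total dominating set. The key observation is that a pair $\{u_j,u_{j+2}\}$ is itself an edge of $P_m^{\,2}$ (so it produces no isolated vertex in the induced subgraph on $S$), while the union of the two open neighbourhoods $N(u_j)\cup N(u_{j+2})$ is exactly the block of seven consecutive objects $u_{j-2},\dots,u_{j+4}$. Placing such pairs with period $7$, namely $\{u_{7k+3},u_{7k+5}\}$ for $k=0,1,\dots$, tiles $\{1,\dots,m\}$ into disjoint seven-object blocks, each totally dominated by just two vertices. After the last complete block I would patch the remaining $m\bmod 7$ objects by a small end configuration chosen according to the residue; translating back through $m=2n-1$ produces the claimed value $4\lceil n/7\rceil-c$ with $c\in\{0,1,2,3\}$.

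For the lower bound, take an arbitrary TDS $S$ of $P_m^{\,2}$ and split it into \emph{clusters}, the maximal sets of chosen positions in which consecutive chosen positions differ by at most $2$. Since $P_m^{\,2}[S]$ has no isolated vertex, every cluster has at least two vertices, and a cluster of $k$ vertices occupies at most $2(k-1)+1$ positions, hence totally dominates at most $2k+3$ objects. As the clusters' dominated sets must cover all $m$ objects, $m\le\sum_i(2k_i+3)=2|S|+3c$, where $c$ is the number of clusters and $2c\le|S|$. Eliminating $c$ yields $|S|\ge 2m/7$, which matches the construction asymptotically.

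The main obstacle is pinning down the exact additive constant in each of the seven residue classes of $n\bmod 7$ (equivalently of $m\bmod 7$). Both the tail of the construction and the two end clusters of a minimum TDS dominate fewer than seven objects because boundary objects of the strip have smaller neighbourhoods, and this end-effect is exactly what creates the savings $-3,-2,-1,0$. Sharpening $m\le 2|S|+3c$ by using the parity of $|S|$ and the fact that the extreme clusters abut the ends of $P_m^{\,2}$, followed by a short case analysis on $m\bmod 7$, is what decides whether $\lfloor 4n/7\rfloor$ or $\lceil 4n/7\rceil$ is attained. This bookkeeping at the two ends, rather than any single idea, is where the real work lies.
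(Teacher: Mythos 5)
The paper does not prove this proposition at all: it is quoted from \cite{KK2017}, so there is no in-paper argument to compare yours against. Judged on its own terms, your plan is sound and would succeed. The identification $T(P_n)\cong P_{2n-1}^{\,2}$ is correct, and your periodic pairs $\{u_{7k+3},u_{7k+5}\}$ are exactly the sets $S_r$ that the paper later writes down (in the proof of its Proposition 4.3) as min-TDSs of $T(P_n)$, so the upper-bound half reduces to exhibiting the tail gadget in each residue class, which is routine. One point worth emphasizing on the lower bound: the end-of-strip bookkeeping you anticipate as ``the real work'' is in fact unnecessary. Your cluster inequality $m\le 2|S|+3c$ together with the integrality refinement $c\le\lfloor |S|/2\rfloor$ already gives $m\le 2s+3\lfloor s/2\rfloor$, i.e.\ $m\le 7s/2$ for $s$ even and $m\le (7s-3)/2$ for $s$ odd; substituting $m=2n-1$ and checking the seven residues of $n$ modulo $7$ shows this single inequality forbids $s=\gamma-1$ in every case (e.g.\ for $n=7t+2$ one gets $m=14t+3$ while $s=4t+1$ covers at most $14t+2$ objects), so the exact constants $-3,-2,-1,0$ all drop out without any analysis of how the extreme clusters sit against the ends. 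The only residue needing care is $n\equiv 4\pmod 7$, where $m=14t+7$ is covered with zero slack and the value is $\lfloor 4n/7\rfloor$ rather than $\lceil 4n/7\rceil$; there the pure periodic tiling with no patch realizes the bound. In short: correct approach, with the lower bound actually easier than you feared and the upper bound the only place where residue-by-residue constructions must still be written out.
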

%------------------------------------------------------------
\begin{lem}\label{alpha (T(P_n))=lfloor 2n-1/3 rfloor}
For any path $P_n$ of order $n\geq 3$, $\alpha_{mix}(P_n)=\lceil \frac{2n-1}{3}\rceil$.
\end{lem}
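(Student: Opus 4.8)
The plan is to pass to the total graph and exploit its very simple linear structure. Since $\alpha_{mix}(P_n)=\alpha(T(P_n))$ by the identity recorded in the introduction, it suffices to compute the independence number of $T(P_n)$. Writing $P_n:v_1v_2\cdots v_n$ with edge set $\mathcal{E}=\{e_{i(i+1)}\mid 1\leq i\leq n-1\}$, I would list the $2n-1$ objects of $T(P_n)$ in the natural interleaved order $u_1=v_1,\,u_2=e_{12},\,u_3=v_2,\,u_4=e_{23},\,\ldots,\,u_{2n-2}=e_{(n-1)n},\,u_{2n-1}=v_n$, so that $v_i=u_{2i-1}$ and $e_{i(i+1)}=u_{2i}$. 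The first step is to verify that, under this labelling, two objects are adjacent in $T(P_n)$ if and only if their indices differ by at most $2$: a vertex--edge incidence corresponds to index-difference $1$, a vertex--vertex or edge--edge adjacency corresponds to index-difference $2$, and no adjacency of $T(P_n)$ produces a larger gap (all gaps $1$ and $2$ are realized, since consecutive $u$'s alternate between a vertex and an incident edge, and objects two apart are either $v_i,v_{i+1}$ or $e_{i(i+1)},e_{(i+1)(i+2)}$). Hence $T(P_n)\cong P_{2n-1}^2$, the square of the path on $2n-1$ vertices, and the problem reduces to computing $\alpha(P_{2n-1}^2)$.

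For the upper bound I would partition $\{u_1,\ldots,u_{2n-1}\}$ into $\lceil (2n-1)/3\rceil$ blocks of (at most three) consecutive objects. Any three consecutive objects have pairwise index-difference at most $2$, so each block induces a clique in $T(P_n)$; an independent set meets each clique in at most one object, giving $\alpha_{mix}(P_n)=\alpha(T(P_n))\leq \lceil (2n-1)/3\rceil$.

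For the matching lower bound I would exhibit the explicit set $\{u_{3k+1}\mid 0\leq k\leq \lfloor (2n-2)/3\rfloor\}$, i.e.\ every third object starting from $u_1$. Consecutive chosen objects have index-difference exactly $3$, so the set is independent in $P_{2n-1}^2$, and its cardinality is $\lfloor (2n-2)/3\rfloor+1=\lceil (2n-1)/3\rceil$. If the paper prefers to mirror the cycle lemma, this set can be rewritten in the original $v_i,e_{i(i+1)}$ notation as a union of certain vertices and edges, split into the residue cases $n\equiv 0,1,2\pmod 3$. Combining the two bounds yields $\alpha_{mix}(P_n)=\lceil \frac{2n-1}{3}\rceil$.

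The routine work lies in the endpoint bookkeeping: one must check the isomorphism $T(P_n)\cong P_{2n-1}^2$ carefully at the two ends of the path, where the degrees drop, and confirm the ceiling identity $\lfloor (2n-2)/3\rfloor+1=\lceil (2n-1)/3\rceil$. I do not expect a genuine obstacle here; the only mild subtlety is that, unlike the $2$-regular cycle case where every object lies in exactly three triangles and a single triangle count suffices (as in Lemma~\ref{alpha (T(C_n))=lfloor 2n/3 rfloor}), the path's endpoints lie in fewer triangles, so the naive triangle-counting bound is not tight and the clique-partition argument above is used instead.
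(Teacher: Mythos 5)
Your proof is correct, and your lower-bound construction --- every third object in the interleaved order $v_1,e_{12},v_2,e_{23},\dots$ --- is in fact exactly the set $\{v_{3i+1}\mid 0\leq i\leq \lceil n/3\rceil-1\}\cup\{e_{(3i+2)(3i+3)}\mid 0\leq i\leq \lfloor n/3\rfloor-1\}$ exhibited in the paper, just written with a single index. Where you genuinely diverge is in the upper bound. The paper adapts the triangle-counting scheme of Lemma~\ref{alpha (T(C_n))=lfloor 2n/3 rfloor}: it records that $v_1,v_n$ (resp.\ $e_{12},e_{(n-1)n}$) lie in only one (resp.\ two) of the $2n-3$ triangles of $T(P_n)$ and then asserts maximality of the exhibited set; making that count rigorous actually needs the further observation that an independent set contains at most one of $\{v_1,e_{12}\}$ and at most one of $\{v_n,e_{(n-1)n}\}$, since otherwise the incidence count only yields $\lfloor(2n+3)/3\rfloor$. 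Your route --- identifying $T(P_n)$ with $P_{2n-1}^2$ via the interleaving and partitioning the $2n-1$ objects into $\lceil(2n-1)/3\rceil$ cliques of consecutive objects --- sidesteps that endpoint bookkeeping entirely and gives a tight upper bound in one line; the isomorphism check and the identity $\lfloor(2n-2)/3\rfloor+1=\lceil(2n-1)/3\rceil$ both verify without difficulty. The only thing the paper's phrasing buys is uniformity with the cycle case; yours is the cleaner and more self-contained argument for the path.
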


\begin{proof}
Let  $P_{n}:v_{1}v_{2}\cdots v_{n}$ be a path of order $n\geq 3$. Then $V(T(P_{n}))=V\cup \mathcal{E}$ where $\mathcal{E}=\{e_{i(i+1)}~| ~1\leq i \leq n-1\}$. Since the number of tiangles that a vertex $w$ from an independent set of $T(P_{n})$ belongs to them is 
\begin{equation*}
\begin{array}{ll}
1   & \mbox{ if and only if } w\in \{v_1,v_{n}\} \mbox{ or}  \\
2   & \mbox{ if and only if } w \in \{e_{12},e_{(n-1)n}\} \mbox{ or} \\
3   & \mbox{ if and only if } w \notin \{v_1,e_{12},v_n, e_{(n-1)n} \},
\end{array}
\end{equation*}
%3 if and only if $w \notin \{v_1,t_{12},v_n, t_{(n-1)n} \}$, and is 2 if and only if $w \in \{t_{12},t_{(n-1)n}\}$, and is 1 if and only if $w\in \{v_1,v_{n}\}$, 
we conclude $\{v_{3i+1}~|~ 0\leq i \leq \lceil \frac{n}{3} \rceil -1 \}\cup \{e_{(3i+2)(3i+3)}~|~ 0\leq i \leq \lfloor \frac{n}{3} \rfloor -1  \}$
 is a maximum independent set of cardinality $\lfloor \frac{n}{3}\rfloor +\lceil \frac{n}{3} \rceil=\lceil \frac{2n-1}{3}\rceil$, and so $\alpha_{mix}(P_n)=\alpha (T(P_n))=\lceil \frac{2n-1}{3}\rceil$.
\end{proof}
%------------------------------------------------------------
%-----------------------------------------------------
\begin{prop}
\label{chi^{t}_d T(P_n)}
For any path $P_n$ of order $n\geq 2$,
\begin{equation*}
\chi^{tt}_d (P_n)=\left\{
\begin{array}{ll}
\gamma_{tm}(P_n)+1  & \mbox{if }n=2,3, \\
\gamma_{tm}(P_n)+2  & \mbox{if }n=4,5,6,8,9,10,13,16, \\
\gamma_{tm}(P_n)+3  & \mbox{if }n= 7,\mbox{ } n\neq 13,16\mbox{ or }n \geq 11 
\end{array}
\right.
\end{equation*}
which by considering  Proposition \ref{gamma_t T(P_n)} implies   
\begin{equation*}
\chi^{tt}_d (P_n)=\left\{
\begin{array}{ll}
n+1 & \mbox{if }n=2,\\
n     & \mbox{if }3\leq n \leq 7,\\
n-1  & \mbox{if }8\leq n \leq 9,%\\
%n-2=8     & \mbox{if }n=10,
\end{array}
\right.
\end{equation*}
and for $n \geq 10$,
%\begin{equation*}
%\chi^{tt}_d (P_n)=\left\{
%\begin{array}{ll}
%4\lceil n/7\rceil     & \mbox{if }n\equiv 1 \pmod{7}\mbox{ or } n=16,\\
%4\lceil n/7\rceil+1 & \mbox{if }n\equiv 2,3,4 \pmod{7}\mbox{ and } n\neq 16,\\
%4\lceil n/7\rceil +2 & \mbox{if }n\equiv 5 \pmod{7} \mbox{ or } n=13,\\
%4\lceil n/7\rceil +3 & \mbox{if }n\equiv 0,6 \pmod{7} \mbox{ and } n\neq 13.
%\end{array}
%\right.
%\end{equation*}
\begin{equation*}
\chi^{tt}_d (P_n)=\left\{
\begin{array}{ll}
\lfloor \frac{4n}{7}\rfloor +3 & \mbox{if }n\equiv 4 \pmod{7} \mbox{ or } n=10, 13, 16,\\
\lceil \frac{4n}{7}\rceil   +3   & \mbox{if }n\not\equiv 4 \pmod{7} \mbox{ and } n\neq 10, 13,16.
\end{array}\right.
\end{equation*}
\end{prop}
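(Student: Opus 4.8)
The plan is to translate everything into the total graph via Theorems \ref{chi_d^{tt}(G)=chi_d^{t}(T(G))} and \ref{gamma_{tm}(G)=gamma_{t}(T(G))}, so that the target becomes a statement about $\chi_d^t(T(P_n)) = \gamma_t(T(P_n)) + c$ with $c\in\{1,2,3\}$. Throughout I would fix the notation $V(T(P_n)) = V\cup\mathcal{E}$ with $\mathcal{E} = \{e_{i(i+1)} : 1\le i\le n-1\}$, record $|V(T(P_n))| = 2n-1$, and use $\alpha := \alpha_{mix}(P_n) = \lceil(2n-1)/3\rceil$ from Lemma \ref{alpha (T(P_n))=lfloor 2n-1/3 rfloor}. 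The whole argument would mirror the proof of Proposition \ref{chi^{t}_d T(C_n)}, the only genuinely new feature being the two degree-reduced ends $v_1, v_n, e_{12}, e_{(n-1)n}$ of the path, which is precisely what produces the extra exceptional orders $n = 4,5,6,8,9,10,13,16$.

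For the upper bound I would invoke Proposition \ref{chi_d^t =<g_t+min{chi(G-S)}}: taking the explicit min-TDSs $S$ of $T(P_n)$ recorded in \cite{KK2017} (the path analogues of the sets $S_0, S$ in the cycle proof, split by $n\bmod 7$), one checks directly that $\chi(T(P_n)-S)\le 3$, whence
\begin{equation*}
\chi_d^{tt}(P_n) \le \gamma_{tm}(P_n) + 3.
\end{equation*}
For the matching lower bound when $n$ is large I would run the triangle-counting argument: $T(P_n)$ carries $2n-3$ triangles on the vertex sets $\{v_i, v_{i+1}, e_{i(i+1)}\}$ and $\{e_{(i-1)i}, v_i, e_{i(i+1)}\}$, and any min-TDC must spend at least two color classes per block of consecutive triangles, forcing at least roughly $2\lfloor 2n/5\rfloor$ classes. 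Since $4n/5 \ge 4n/7 + 3 = \gamma_{tm}(P_n)+3$ once $n$ is large, this gives $\chi_d^t(T(P_n)) = \gamma_{tm}(P_n)+3$ for all sufficiently large $n$, pinning down the generic $+3$ behaviour.

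It then remains to treat the finitely many small orders (up to about $n=19$). Here I would re-derive, for $T(P_n)$, the path versions of Facts 1--10 from the cycle proof: Fact 1 becomes $|V_k|\le\alpha$ and $\sum_k|V_k| = 2n-1$; Fact 2 ($|V_k|\le 2$ whenever $V_k$ is totally dominated) survives, since every object's neighbourhood in $T(P_n)$ induces a subpath and so has independence number at most $2$; but Facts 3--5 need the boundary correction that an end object has only three neighbours, so a singleton end class totally dominates fewer objects than an interior one. With the corrected counting inequalities $2a_1 + a_2 \ge n - O(1)$ and $\gamma_{tm}(P_n)\le a_1+a_2\le\ell$ in hand, I would for each $n\in\{4,\dots,19\}$ (doing $n=2,3$ by direct construction) enumerate the admissible pairs $(a_1,a_2)$, discard those violating $\sum_k|V_k| = 2n-1$ or $|V_1|\le\alpha$, and rule out the survivors via Fact 10 and the packing estimate $|\bigcup_{|V_i|\le 2}CN(V_i)| \le |V(T(P_n))|$, using that equality forces a $K_3$ inside the two largest (size-$\alpha$) classes, contradicting that these form a proper $2$-colouring. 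The exceptional $+2$ orders $n = 4,5,6,8,9,10,13,16$ and the $+1$ orders $n=2,3$ would each be certified by an explicit min-TDTC, exactly as the figures do for $C_9$ and $C_{12}$.

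The main obstacle is the small-$n$ casework, and specifically the boundary bookkeeping. Because the two ends of $P_n$ dominate one fewer object than interior objects, the clean cycle identities $|CN(V_k)\cap V| = |CN(V_k)\cap\mathcal{E}| = 2$ and $2a_1+a_2\ge n$ acquire end corrections, and these corrections are exactly what allow the value to drop to $\gamma_{tm}(P_n)+2$ for the sporadic orders $4,5,6,8,9,10,13,16$. Making the adapted Facts 5--8 sharp enough to \emph{exclude} $\ell = \gamma_{tm}(P_n)+2$ for every other small $n$, while simultaneously \emph{exhibiting} it for those eight orders, is the delicate part; the clique obstruction (a proper $2$-colouring cannot contain $K_3$) is the uniform tool that closes each non-exceptional case, but verifying the packing identity $\mathcal{H}_k\cong jK_2$ case-by-case, as in the $C_{16}$ and $C_{19}$ arguments, is where the work concentrates.
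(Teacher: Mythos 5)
Your overall strategy matches the paper's: upper bound via Proposition \ref{chi_d^t =<g_t+min{chi(G-S)}} applied to the min-TDSs of $T(P_n)$, a triangle-counting lower bound for large $n$, and an $(a_1,a_2)$-enumeration with packing estimates and the $K_3$-in-a-bipartite-class obstruction for small $n$, with explicit colorings certifying the exceptional orders. However, there is a concrete quantitative gap in the middle range. The crude triangle count gives at least $2\lfloor\frac{2n-3}{5}\rfloor$ color classes, and one checks that $2\lfloor\frac{2n-3}{5}\rfloor\geq\gamma_{tm}(P_n)+3$ holds only for $n=19$, $n=22$ and $n\geq 24$; it \emph{fails} for $n=18,20,21,23$ (e.g.\ for $n=18$ it gives $12$ while $\gamma_{tm}(P_{18})+3=13$). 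Your plan sends ``sufficiently large $n$'' to the triangle count and ``up to about $n=19$'' to the casework, so the orders $18,20,21,23$ fall through: they are too large for the $(a_1,a_2)$ enumeration to be practical (the paper stops that analysis at $n=17$) and too small for the plain triangle count. The paper closes this with a separate argument (its Case 1): it shows that the objects near the two ends ($v_1,v_2,v_3$ and $v_{n-2},v_{n-1},v_n$ together with the triangle on $\{v_4,e_{34},e_{45}\}$) already force at least $7$ pairwise distinct color classes, and then runs the triangle count only on the interior $\{v_i,e_{i(i+1)}:6\leq i\leq n-4\}$ to get $\chi_d^t(T(P_n))\geq 7+2\lfloor\frac{2(n-10)}{5}\rfloor\geq\gamma_{tm}(P_n)+3$. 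Some such refinement is indispensable, and your sketch does not supply it.

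Two smaller points. First, the paper's boundary corrections go in a different direction than you suggest: the key new facts for paths are $a_1\geq 2$ (each end vertex $v_1,v_n$ forces a distinct singleton class) and the inequality $2a_1+a_2\geq n$ is retained unweakened, whereas your proposed relaxation $2a_1+a_2\geq n-O(1)$ would lose the sharpness needed to eliminate candidate pairs $(a_1,a_2)$. Second, several of the small cases ($n=11,12,14,15,17$) are not killed by the packing/clique argument at all but by the paper's Fact 11: $V(T(P_n))$ partitions uniquely into three maximal independent sets, so when the singleton and doubleton classes together fill up such a set (or such a set plus one vertex), no vertex of that independent set can be totally dominated by any admissible class. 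Your toolkit omits this, and it is not clear the packing estimate alone closes, say, the case $(a_1,a_2)=(5,1)$ for $n=11$.
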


\begin{proof}
Let $P_{n}:v_{1}v_{2}\cdots v_{n}$ be a path of order $n\geq 2$. Then $V(T(P_{n}))=V\cup \mathcal{E}$ where $\mathcal{E}=\{e_{i(i+1)}~|~ 1\leq i \leq n-1\}$. Since $T(P_2)\cong K_{3}$ and $T(P_3)$ contains $K_3$ as a subgraph and  $(\{v_{1},e_{23}\},\{v_{3},e_{12}\},\{v_{2}\})$ is a TDC of $T(P_3)$, we have $\chi_d^{tt}(P_n)=\chi^{t}_d (T(P_n))=3$ for $n=2,3$. We know from \cite{KK2017} that for $n\geq 4$ the sets
%---------------
\begin{equation*}
\label{S_r, 0 leq r leq 6}
\begin{array}{ll}
S_0=\{v_{7i+2}, v_{7i+3}, e_{(7i+5)(7i+6)}, e_{(7i+6)(7i+7)}~|~ 0 \leq i \leq \lfloor n/7\rfloor-1\} & \mbox{if }n \equiv 0 \pmod{7}, \\
S_1=S_0 \cup\{e_{(n-1)n}\}& \mbox{if }n \equiv 1 \pmod{7}, \\
S_2=S_3=S_0 \cup\{v_{n-1},v_{n}\} & \mbox{if }n \equiv 2,3 \pmod{7}, \\
S_4=S_0 \cup\{v_{n-2},v_{n-1}\} & \mbox{if }n \equiv 4 \pmod{7}\\
S_5=S_0 \cup\{v_{n-3},v_{n-2},v_{n-1}\} & \mbox{if }n \equiv 5 \pmod{7},\\
S_6=S_0 \cup\{v_{n-4},v_{n-3},e_{(n-2)(n-1)},e_{(n-1)n}\} & \mbox{if }n \equiv 6 \pmod{7}
\end{array}
\end{equation*}
%---------------
are minimum total dominating sets of $T(P_n)$. Since $\chi (T(P_n)-S_r)=3$ for each $0\leq r\leq 6$, Proposition \ref{chi_d^t =<g_t+min{chi(G-S)}} implies
%---------------
\begin{equation}
\label{chi^{t}_d (T(P_n))  leq gamma_t(T(P_n))+3}
\chi^{tt}_d (P_n)\leq \gamma_{tm}(P_n)+3.
\end{equation} 
%---------------
For $n=19$, 22 or $n \geq 24$, since $T(P_n)$ has $2n-3$ triangles with the vertex sets $\{v_i, v_{i+1}, e_{i(i+1)}\}$ when $1\leq i \leq n-1$ or $\{e_{(i-1)i}, v_{i}, e_{i(i+1)}\}$ when $2\leq i \leq n-1$, and since at least two color classes are needed for totally dominating the vertices of every five consecutive triangles with the vertex sets 
\begin{equation*}
\begin{array}{ll}
\{ v_{i}, v_{i+1}, v_{i+2}, v_{i+3}, e_{i(i+1)}, e_{(i+1)(i+2)},e_{(i+2)(i+3)} \} \mbox{ or}  \\
\{v_{i}, v_{i+1},v_{i+2},e_{(i-1)i},e_{i(i+1)},e_{(i+1)(i+2)},e_{(i+2)(i+3)}\},
\end{array}
\end{equation*}
we conclude the number of used color classes in  $T(P_n)$ is at least $2 \lfloor \frac{2n-3}{5} \rfloor \geq \gamma_{t}(T(P_{n}))+3=\gamma_{tm}(P_{n})+3$, and so $\chi_d^{tt}(P_n)=\gamma_{tm}(P_{n})+3 $ by (\ref{chi^{t}_d (T(P_n))  leq gamma_t(T(P_n))+3}). Therefore we continue our proof in the following two remained cases by this assumption that $\mathcal{H}_k$ denotes a graph of order $k$ with positive minimum degree.
\vspace{0.2cm}

%-------------------------------------
\textbf{Case 1.} $n=18,20,21,23$. Let $f=(V_{1},V_{2},\cdots,V_{\ell})$ be a min-TDC of $T(P_{n})$ and let $A=\{v_{1},v_{2},v_{3}\}$, $B=\{v_{n-2},v_{n-1},v_{n}\}$. Then $v_{1} \succ_t V_{k}$ for some $k$ implies $V_{k}=\{w\}$ where $w\in\{v_{2},e_{12}\}$. Let $w \succ_t V_{m}$ for some $m$. Since $V_{m}\subseteq \{v_{1}, v_{3},e_{12},e_{23}\}$ when $w=v_{2}$ and $V_{m}\subseteq \{v_{1}, v_{2}, e_{23}\}$ when $w=e_{12}$, we have $V_m\neq V_k$. Since a similar result holds by considering $v_n$ instead of $v_1$, we conclude $|\{V_{k}~|~ v_{i} \succ_t V_{k} \mbox{ for some } v_{i}\in A\cup B \}|\geq 4$. Let $\{V_{k}~|~ v_i \succ_t V_{k} \mbox{ for some } v_{i}\in A \}=\{V_{1}, V_{2}\}$ and $\{V_{k}~|~ v_i \succ_t V_{k} \mbox{ for some } v_{i}\in B \}=\{V_3, V_4\}$ in which $|V_1|+|V_2|\leq 3$ and $|V_3|+|V_4|\leq 3$. Then $\bigcup_{i=1}^{4}V_i \subseteq A\cup B \cup \{e_{12},e_{23}, e_{(n-2)(n-1)}, e_{(n-1)n}\}$. Now since the subgraph of $T(P_n)-(A\cup B)$ induced by $\{v_4,e_{34}, e_{45}\}$ is a complete graph and  $\{v_4,e_{34},e_{45}\} \cap (\bigcup_{i=1}^{4}V_i) =\emptyset$, we have $\ell\geq 7$. Without loss of generality, we may assume $e_{34} \in V_5$, $e_{45} \in V_6$ and $v_{4} \in V_7$. Let  $C=\{v_i,e_{i(i+1)}~|~6\leq i \leq n-4\}$ and $C_V=\{V_k~|~v \succ_{t} V_k~\mbox{ for some }v \in C\}$. Then for every $v \in C$, $v \nsucc_{t} V_k$  when $1 \leq k \leq 7$. We know the subgraph of $T(P_n)$ induced by $C$ has $2(n-10)$ distinct triangles with the vertex sets $\{v_i, v_{i+1}, e_{i(i+1)}\}$ or $\{e_{(i-1)i}, v_{i}, e_{i(i+1)}\}$ for $6\leq i \leq n-5$. Since at least two color classes are need for totally dominating the vertices of every five consecutive triangles with the vertex sets $\{ v_{i}, v_{i+1}, v_{i+2}, v_{i+3}, e_{i(i+1)}, e_{(i+1)(i+2)},e_{(i+2)(i+3)} \}$ or $\{v_{i}, v_{i+1},v_{i+2},e_{(i-1)i}, e_{i(i+1)},e_{(i+1)(i+2)},e_{(i+2)(i+3)}\}$, we conclude that the number of used color classes in  $T(P_n)[C]$ is at least $2 \lfloor \frac{2(n-10)}{5} \rfloor$. So $\chi_d^t(T(P_n)) \geq 7+2 \lfloor \frac{2(n-10)}{5} \rfloor \geq  \gamma_t(T(P_n))+3$, which implies $\chi_d^{tt}(P_n) =\gamma_{tm}(P_n)+3$ by (\ref{chi^{t}_d (T(P_n))  leq gamma_t(T(P_n))+3}).
\vspace{0.2cm}

\textbf{Case 2.} $4 \leq n \leq 17$.  Let $f=(V_{1},V_{2},\cdots,V_{\ell})$ be a min-TDC of $T(P_{n})$ such that $|V_1|\geq |V_2|\geq \cdots \geq |V_{\ell}|$ and for $1 \leq i \leq \lceil \frac{2n-1}{3}\rceil $, let
\begin{equation*}
A_{i}=\{V_{k}~|~ v \succ_{t} V_{k}\mbox{ and }|V_{k}| = i \mbox{ for some }v \in V\cup \mathcal{E}\}
\end{equation*}
be  a set of cardinality $a_i$ (we recall $\lceil \frac{2n-1}{3}\rceil=\alpha_{mix}(P_n)=\alpha(T(P_n))$ which is denoted simply by $\alpha$). By considering the following facts we countinue our proof in the following subcases.
%------------------------
\begin{itemize}
\item[$\star$] \texttt{Fact 1.} $|V_{k}|\leq \lceil \frac{2n-1}{3}\rceil $ for $1\leq k\leq \ell$ and $\sum_{i=1}^{\ell}|V_i|=2n-1$.
 
\item[$\star$] \texttt{Fact 2.} For any $v \in V\cup \mathcal{E}$, if $v \succ_{t} V_{k}$ for some $1\leq k\leq \ell$, then $|V_{k}|\leq 2$.

\item[$\star$] \texttt{Fact 3.} For any vertex  $v \in M$, if $v \succ_{t} V_{k}$ for some $1\leq k\leq \ell$ and $|V_k|=2$, then $CN(V_k)\cap M=\{v\}$, where $M\in \{\mathcal{E},V\}$, and since $CN(V_k)\cap V\neq \emptyset$ if and only if $CN(V_k)\cap \mathcal{E}\neq \emptyset$, we have $|CN(V_k)|=2$.

\item[$\star$] \texttt{Fact 4.} For any color class $V_k$ of cardinality one, $1\leq |CN(V_k)\cap V|\leq 2$ and $1\leq |CN(V_k)\cap \mathcal{E}|\leq 2$.

\item[$\star$] \texttt{Fact 5.} $a_{1} \geq 2$. Because for $i=1,n$, $v_{i} \succ_{t} V_{k_i}$ implies $|V_{k_i}|=1$ and $V_{k_1}\neq V_{k_n}$.

\item[$\star$] \texttt{Fact 6.} $\gamma_{tm}(P_n)\leq a_1+a_2\leq \ell $. Because the set $S$ is a TDS of $T(P_n)$ where $|S\cap V_i|=1$ for each $V_i\in A_1\cup A_2$ by \texttt{Fact} 2 (for left), and $a_1+\cdots +a_{\alpha}=\ell$ (for right).

\item[$\star$] \texttt{Fact 7.} $2a_1+a_2\geq n$ (by \texttt{Facts} 3, 4).

\item[$\star$] \texttt{Fact 8.} $\max\{n-\ell,2\} \leq a_1 \leq \lfloor \frac{\alpha \ell -2n+1}{\alpha -1} \rfloor$. Because
%-------------------------------------
\begin{equation*}
%\label{2n-1-a_1 leq (ell-a_1)alpha}
\begin{array}{llll}
2n-1-a_1             &   =   & |V(T(P_n))|-|\{V_{i} ~|~|V_{i}|=1 \mbox{ for } 1\leq i \leq \ell\}| \\
                          & =     & \Sigma_{|V_i|\geq 2} |V_i|\\
                          & \leq & (\ell-a_1)\alpha
\end{array}
\end{equation*} 
%-------------------------------------
implies the upper bound, and for the lower bound
%-------------------------------------
\begin{equation}
\begin{array}{llll}
2\ell -n             &   \geq    & 2(a_1+a_2)-n & (\mbox{by } \texttt{Fact } 6) \\
                       &  \geq     & a_2                & (\mbox{by }  \texttt{Fact } 7),
     \end{array}
\end{equation} 
%-------------------------------------
implies 
%-------------------------------------
\begin{equation*}
\begin{array}{llll}
a_1             &  \geq   & \frac{n-a_2}{2} & (\mbox{by }  \texttt{Fact } 7) \\
                          & \geq  & n-\ell & \mbox{by } (2.0.2).
     \end{array}
\end{equation*} 
%-------------------------------------

\item[$\star$] \texttt{Fact 9.} $\max\{0,n-2a_1,\gamma_{tm}(P_n)-a_1\}\leq a_2\leq \min\{ 2\ell-n, \ell -a_1\}$ (by \texttt{Facts} $5-8$).

\item[$\star$]  \texttt{Fact 10.} If $J=\{k~|~|V_k|=2 \mbox{ and } |CN(V_k)|\neq 0  \}$, then the number of isolate vertices of $T(P_n)[\bigcup_{|V_k|=1}V_k]$ is at most $|J|$ (because $(|V_k|,|CN(V_k)|)=(2,2)$ implies $T(P_n)[CN(V_k)]\cong K_2$, by \texttt{Fact 3}). 

\item[$\star$]  \texttt{Fact 11.} $V(T(P_n))$ has an unique partition to three maximal independent sets $\mathcal{W}_1$, $\mathcal{W}_2$, $\mathcal{W}_3$ such that 
\begin{equation*}
\begin{array}{ll}
|\mathcal{W}_1|=|\mathcal{W}_2|=|\mathcal{W}_3|=\lceil \frac{2n-1}{3}\rceil   & \mbox{if }n \equiv 2 \pmod{3}, \\
|\mathcal{W}_1|=|\mathcal{W}_2|+1=|\mathcal{W}_3|+1=\lceil \frac{2n-1}{3}\rceil   & \mbox{if }n \equiv 1 \pmod{3}, \\
|\mathcal{W}_1|=|\mathcal{W}_2|=|\mathcal{W}_3|+1=\lceil \frac{2n-1}{3}\rceil   & \mbox{if }n \equiv 0 \pmod{3}.
\end{array}
\end{equation*}
\end{itemize}
%----
%NEW FROM Cn

\begin{itemize}
%-------------------------------------
\item $n=4,5$. Since $\ell=n-1$ implies $a_{1}=1$, which contradicts \texttt{Fact 5}, we have $\ell \geq n$. Now since $(\{v_2\},\{v_3\},\{e_{12},e_{34}\},\{v_1,e_{23},v_4\})$ is a TDC of $T(P_4)$ and also $(\{v_2\},\{v_3\},\{v_4\},\{v_5,e_{12},e_{34}\},\{v_1,e_{23},e_{45}\})$ is a TDC of $T(P_5)$, we obtain $\chi_d^{tt}(P_n)=n$ when $n=4,5$.  
%-------------------------------------
\item $n=6$. Let $\ell=5$. Then $(a_1,a_2)=(2,2)$, $(2,3)$, $(3,1)$, $(3,2)$. Because \texttt{Facts} $5-9$ imply $2a_1+a_2\geq 6$, $4\leq a_1+a_2\leq 5$, $2 \leq a_1 \leq 3$ and $\max\{0,6-2a_1,4-a_1\} \leq a_2 \leq \min\{ 4, 5-a_1\}$. Since $(a_1,a_2)=(2,3)$, $(3,2)$ imply $\sum_{i=1}^{5}|V_i|\neq 2n-1$ and $(a_1,a_2)=(2,2)$, $(3,1)$ imply $|V_1|>\alpha=4$, which contradict \texttt{Fact 1}, we have $\ell\geq 6$. Now since the coloring function $(\{v_2\},\{v_3\},\{v_4\},\{v_5\},\{e_{12},e_{34},e_{56}\},\{v_1,e_{23},e_{45},v_6\})$ is a TDC of $T(P_6)$, we obtain $\chi_d^{tt}(P_6)=6=n$. 
%-------------------------------------
\item $n=7$. Let $\ell=6$. Then $(a_1,a_2)=(2,3)$, $(2,4)$, $(3,1)$, $(3,2)$, $(3,3)$, $(4,0)$, $(4,1)$, $(4,2)$. Because \texttt{Facts} $5-9$ imply $2a_1+a_2\geq 7$, $4\leq a_1+a_2\leq 6$, $2 \leq a_1 \leq 4$ and $\max\{0,7-2a_1,4-a_1\}\leq a_2\leq \min\{5, 6-a_1\}$.  Since $(a_1,a_2)=(2,4)$, $(3,3)$, $(4,2)$ imply $\sum_{i=1}^{6}|V_i|\neq 2n-1$ and $(a_1,a_2)=(3,2)$, $(4,1)$ imply $|V_1|>\alpha=5$, which contradict \texttt{Fact 1}, we have $(a_1,a_2)=(2,3)$, $(3,1)$, $(4,0)$. 
%-----------------------
\begin{itemize}
\item[$\circ$] $(a_1,a_2)=(4,0)$. Then, since the number of isolate vertices of $T(P_7)[V_3\cup \cdots \cup V_6]$ is zero, we have $T(P_7)[V_3\cup \cdots \cup V_6]\cong \mathcal{H}_4$. Since $|\bigcup_{i=3}^{6} CN(V_i)|<|V(T(P_7))|=13$ when $ \mathcal{H}_4\not\cong 2K_2$, we assume $ \mathcal{H}_4\cong 2K_2$. By assumptions $T(P_7)[V_3\cup V_4]\cong T(P_7)[V_5\cup V_6]\cong  K_2$, we have $|(\bigcup_{i=3}^{4} CN(V_i))\cap (\bigcup_{i=5}^{6} CN(V_i))|=1$. This garantees that $(V_3,V_4,V_5,V_6)=(\{v_2\},\{v_3\},\{v_5\},\{v_6\})$ or $(\{v_2\},\{v_3\},\{e_{56}\},\{e_{67}\})$. But then, since $T(P_7)[V_1\cup V_2]$ with chromatic number two contains a complete subgraph $K_3$ with the vetex set $\{v_4,e_{34},e_{45}\}$, we reach to the contradiction.
%--------------
\item[$\circ$] $(a_1,a_2)=(3,1)$. Then, since the number of isolate vertices of $T(P_7)[V_4\cup \cdots \cup V_6]$ is one and so $T(P_7)[V_3\cup \cdots \cup V_6]\cong K_1\cup K_2$ or $\mathcal{H}_3$, we have 
\begin{equation*}
\begin{array}{lllll}
|\bigcup_{i=3}^{6} CN(V_i)| &\leq & |CN(V_3)|+|\bigcup_{i=4}^{6} CN(V_i)|\\
&\leq &2+\max\{3+7, 9\}\\
&<& |V(T(P_7))|,
\end{array}
\end{equation*}
a contradiction with (\ref{CN(V_i)=V}). 
%--------------
\item[$\circ$] $(a_1,a_2)=(2,3)$. Then, since the number of isolate vertices of $T(P_7)[V_5\cup V_6]$ is two or zero and so $T(P_7)[V_3\cup \cdots \cup V_6]\cong \overline{ K_2}$ or $K_2$, we have 
\begin{equation*}
\begin{array}{lllll}
|\bigcup_{i=3}^{6} CN(V_i)| &\leq & |\bigcup_{i=3}^{4}CN(V_i)|+|\bigcup_{i=5}^{6} CN(V_i)|\\
&\leq &4+\max\{2 \times 3, 7\}\\
&<& |V(T(P_7))|,
\end{array}
\end{equation*}
a contradiction with (\ref{CN(V_i)=V}). 
%-----------------
\end{itemize}
%--------------
Thus $\ell\geq 7$, which implies $\chi_d^{tt}(P_7)=7=\gamma_{tm}(P_7)+3$ by (\ref{chi^{t}_d (T(P_n))  leq gamma_t(T(P_n))+3}).
%-------------------------------------
\item $n=8$. Let $\ell=6$. Then $(a_1,a_2)=(2,4)$, $(3,2)$,  $(3,3)$. Because \texttt{Facts} $5-9$ imply $2a_1+a_2\geq 8$, $5\leq a_1+a_2\leq 6$, $3 \leq a_1 \leq 5$ and $\max\{0,8-2a_1,5-a_1\}\leq a_2\leq \min\{4, 6-a_1\}$. Since $(a_1,a_2)=(2,4)$, $(3,3)$ imply $\sum_{i=1}^{6}|V_i|\neq 2n-1$ and $(a_1,a_2)=(3,2)$ imply $|V_1|>\alpha=5$, which contradict \texttt{Fact 1}, we have $\ell\geq 7$. Since now $(\{v_{1}, e_{23}, e_{45}, e_{67}, v_{8}\},\{e_{12},e_{34}, v_{5},e_{78}\},\{v_{4}, e_{56}\},\{v_{3}\},\{v_{2}\},\{v_{6}\},\{v_{7}\})$ is a TDC of $T(P_8)$, we obtain $\chi^{tt}_d (P_{8})=7=n-1$. 
%-------------------------------------
\item $n=9$. Let $\ell=7$. Then $(a_1,a_2)=(2,5)$, $(3,3)$, $(3,4)$, $(4,2)$, $(4,3)$,  $(5,1)$, $(5,2)$. Because \texttt{Facts} $5-9$ imply $2a_1+a_2\geq 9$, $6\leq a_1+a_2\leq 7$, $2 \leq a_1 \leq 5$ and $\max\{0,9-2a_1,6-a_1\}\leq a_2\leq \min\{5, 7-a_1\}$. Since $(a_1,a_2)=(2,5)$, $(3,4)$, $(4,3)$,$(5,2)$ imply $\sum_{i=1}^{7}|V_i|\neq 2n-1$ and $(a_1,a_2)=(3,3)$, $(4,2)$,  $(5,1)$ imply $|V_1|>\alpha=6$, which contradict \texttt{Fact 1}, we have $\ell\geq 8$.  Now since 
\[
(\{v_{2}\},\{v_{3}\},\{e_{45}\},\{e_{56}\},\{v_{7}\},\{v_{8}\},\{v_{1},v_{4},v_{6},v_{9},e_{23},e_{78}\}, \{e_{12},e_{34},e_{67},e_{89},v_{5}\})
\]
 is a TDC of $T(P_9)$, we  have $\chi^{tt}_d (P_9)=8=n-1$.
%-------------------------------------
\item $n=10$. Let $\ell=7$.  Then $(a_1,a_2)=(3,4)$, $(4,2)$, $(4,3)$, $(5,1)$, $(5,2)$. Because \texttt{Facts} $5-9$ imply $2a_1+a_2\geq 10$, $6\leq a_1+a_2\leq 7$, $3\leq a_1\leq 5$ and $\max\{0,10-2a_1,7-a_1\}\leq a_2\leq \min\{4, 7-a_1\}$. Since $(a_1,a_2)=(3,4)$, $(4,3)$, $(5,2)$ imply $\sum_{i=1}^{7}|V_i|\neq 2n-1$ and $(a_1,a_2)=(4,2)$, $(5,1)$ imply $|V_1|>\alpha=7$, which contradict \texttt{Fact 1}, we have $\ell\geq 8$. Now since $(V_1,V_2,\{e_{45}, e_{67}\},\{e_{56}\},\{v_{2}\}, \{v_{3}\}, \{v_{8}\},\{v_{9}\})$ is a TDC of $T(P_{10})$ where $V_1=\{v_{1}, v_{4}, v_{6}, e_{23}, e_{78},e_{9(10)} \}$, $V_2=\{v_{5}, v_{7}, v_{10},e_{12},e_{34}, e_{89} \}$ and  it is shown in Figure \ref{TDTC6}, we have $\chi^{tt}_d (P_{10})=8=\lfloor \frac{4n}{7}\rfloor +3$.
%---------------------------
\begin{figure}[ht]
\centerline{\includegraphics[width=7.8cm, height=3.5cm]{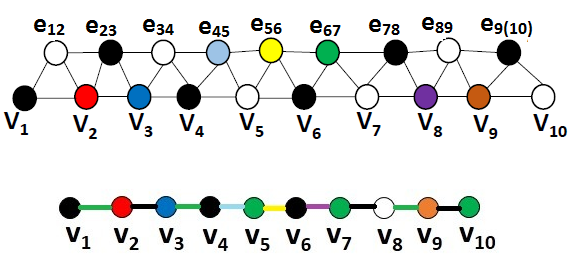}}
\vspace*{-0.25cm}
\caption{A min-TDC of $T(P_{10})$ (up) and the corresponding min-TDTC of $P_{10}$ (down).}\label{TDTC6}
\end{figure}
%-------------------------------------------------
\item $n=11$. Let $\ell=8$. Then $(a_1,a_2)=(3,5)$, $(4,3)$, $(4,4)$, $(5,1)$, $(5,2)$, $(5,3)$. Because \texttt{Facts} $5-9$ imply $2a_1+a_2\geq 11$, $6\leq a_1+a_2\leq 8$, $3 \leq a_1 \leq 5$ and $\max\{0,11-2a_1,6-a_1\}\leq a_2\leq \min\{5, 8-a_1\}$. Since $(a_1,a_2)=(3,5)$, $(4,4)$, $(5,3)$ imply $\sum_{i=1}^{8}|V_i|\neq 2n-1$ and $(a_1,a_2)=(4,3)$, $(5,2)$ imply $|V_1|>\alpha=7$, which contradict \texttt{Fact 1}, we have $(a_1,a_2)=(5,1)$ that is, $(|V_1|,\cdots,|V_{8}|)= (7,7,2,1,1,1,1,1)$. Then $V_3\cup \cdots \cup V_8$ is a maximal independent set by \texttt{Fact 11}, and all vertices of the set are totally dominated by no color class. Thus $\ell\geq 9$, which implies $\chi_d^{tt}(P_{11})=9=\gamma_{tm}(P_{11})+3$ by (\ref{chi^{t}_d (T(P_n))  leq gamma_t(T(P_n))+3}). 
%-------------------------------------
\item $n=12$. Let $\ell=9$.  Then $(a_1,a_2)=(3,6)$, $(4,4)$, $(4,5)$, $(5,2)$, $(5,3)$, $(5,4)$, $(6,1)$,$(6,2)$, $(6,3)$, $(7,0)$, $(7,1)$, $(7,2)$. Because \texttt{Facts} $5-9$ imply $2a_1+a_2\geq 12$, $7\leq a_1+a_2\leq 9$, $3 \leq a_1 \leq 7$ and $\max\{0,12-2a_1,7-a_1\}\leq a_2\leq \min\{6, 9-a_1\}$. Since $(a_1,a_2)=(3,6)$, $(4,5)$, $(5,4)$, $(6,3)$, $(7,2)$ imply $\sum_{i=1}^{9}|V_i|\neq 2n-1$ and $(a_1,a_2)=(4,4)$, $(5,3)$, $(6,2)$, $(7,1)$ imply $|V_1|>\alpha=8$, which contradict \texttt{Fact 1}, we have $(a_1,a_2)=(5,2)$, $(6,1)$, $(7,0)$. If $(a_1,a_2)=(7,0)$, then $V_3\cup \cdots \cup V_9=\mathcal{W}_3$  which is a maximal independent set 
by \texttt{Fact 11}, and so no vertex of the set is totally dominated by a color class.
%------------------
\begin{itemize}
%--------------------
\item[$\circ$] $(a_1,a_2)=(5,2)$. %, that is, $(|V_1|,\cdots,|V_{9}|) =(8,6,2,2,1,1,1,1,1)$ or $(7,7,2,2,1,1,1,1,1)$. 
Then, since  the number of isolate vertices of $T(P_{12})[V_5\cup \cdots \cup V_{9}]$ is at most 2 (by \texttt{Fact 10}) and so $T(P_{12})[V_5\cup \cdots \cup V_{9}]\cong \overline{K_2}\cup H_{3}$, $K_1\cup \mathcal{H}_4$ or $\mathcal{H}_5$, 
\begin{equation*}
\begin{array}{lllll}
|\bigcup_{i=3}^{9} CN(V_i)| &\leq & |\bigcup_{i=3}^{4} CN(V_i)|+|\bigcup_{i=5}^{9} CN(V_i)|\\
& \leq & 4+\max\{ 2\times 3+ 9, 1\times 3 + 2\times 7, 1\times 7+ 1 \times 9\}\\
& < &|V(T(P_{12}))|,
\end{array}
\end{equation*}
a contradiction with (\ref{CN(V_i)=V}). 
%--------------------
%\item[$\circ$] $(a_1,a_2)=(6,1)$, that is, $(|V_1|,\cdots,|V_{9}|) =(8,7,2,1,1,1,1,1,1)$. Then either $V_3\cup \cdots \cup V_{9}=\mathcal{W}_2$  which is a maximal independent set (by \texttt{Fact 11}) and so no vertex of the set is totally dominated by a color class, or $V_3\cup \cdots \cup V_{9}=\mathcal{W}_3\cup \{w\}$ for some vertex $w\in \mathcal{W}_1\cup \mathcal{W}_2$, and since $|N(w)\cap \mathcal{W}_3|\leq 2$, there exist at least five vertices in $V_3\cup \cdots \cup V_{9}$ which are totally dominated by no color class.
%-------------------------------
\item[$\circ$] $(a_1,a_2)=(6,1)$. %, that is, $(|V_1|,\cdots,|V_{9}|) =(8,7,2,1,1,1,1,1,1)$. 
Then, since  the number of isolate vertices of $T(P_{12})[V_4\cup \cdots \cup V_{9}]$ is at most 1 (by \texttt{Fact 10}) and so $T(P_{12})[V_4\cup \cdots \cup V_{9}]\cong K_1\cup \mathcal{H}_5$ or $\mathcal{H}_6$,
\begin{equation}
\label{P12 (6,1)}
\begin{array}{lllll}
|\bigcup_{i=3}^{9} CN(V_i)| &\leq & |\bigcup_{i=3}^{4} CN(V_i)|+|\bigcup_{i=5}^{9} CN(V_i)|\\
& \leq & 2+\max\{ 1\times 3 + 1\times 7+1\times 9, 3 \times 7\}\\
& = &|V(T(P_{12}))|.
\end{array}
\end{equation}
We see that equality holds in (\ref{P12 (6,1)}) if and only if $T(P_{12})[V_4\cup \cdots \cup V_{9}]\cong 3K_2\cong \bigcup_{i=1}^3 T(P_{12})[V_{2i-1}\cup V_{2i}]$ such that $(CN(V_{2i-1})\cup CN(V_{2i}))\cap (CN(V_{2j-1})\cup CN(V_{2j}))$ for each $1\leq i <j \leq 3$. This implies $V_{4}=\{v_2\}$, $ V_5=\{v_3\}$, $V_{6}=\{e_{56}\}$, $ V_7=\{e_{67}\}$, $V_{8}=\{v_9\}$, $ V_9=\{v_{10}\}$. Since the vertices $v_{12}$ and $e_{(11)(12)}$ did not totally dominated by a color class of cardinality one, we must have $CN(V_3)=\{v_{12},e_{(11)(12)}\}$ which is not possible. 
%--------------------
\end{itemize}
Thus $\ell\geq 10$, which implies $\chi_d^{tt}(P_{12})=10=\gamma_{tm}(P_{12})+3 $ by (\ref{chi^{t}_d (T(P_n))  leq gamma_t(T(P_n))+3}).

%\newpage
%-------------------------------------
\item $n=13$. Let $\ell=9$. Then $(a_1,a_2)=(4,5)$, $(5,3)$, $(5,4)$,  $(6,2)$,$(6,3)$,  $(7,1)$, $(7,2)$. Because \texttt{Facts} $5-9$ imply $2a_1+a_2\geq 13$, $8\leq a_1+a_2\leq 9$, $4 \leq a_1 \leq 7$ and 
$\max\{0,13-2a_1,8-a_1\}\leq a_2\leq \min\{7, 10-a_1\}$. Since $(a_1,a_2)=(4,5)$, $(5,4)$, $(6,3)$, $(7,2)$ imply $\sum_{i=1}^{9}|V_i|\neq 2n-1$, and $(a_1,a_2)=(5,3)$, $(6,2)$, $(7,1)$ imply $|V_1|>\alpha=9$, which contradict \texttt{Fact 1}. 
Thus $\ell\geq 10$. Now since $(V_{1},V_{2},\cdots,V_{10})$ is a TDC of $T(P_{13})$ where
\begin{equation*}
\begin{array}{ll}
V_1=\{v_{1}, v_{6}, v_{8},v_{13},   e_{23}, e_{45},e_{9(10)},e_{(11)(12)} \}, \\
V_2=\{v_{5}, v_{7}, v_{9}, e_{12},e_{34}, e_{(10)(11)},e_{(12)(13)} \}, \\
V_3=\{ v_{4},e_{56}\},~V_4=\{ v_{10},e_{89}\},~V_5=\{ v_2\},~V_6=\{ v_{3}\},\\
V_7=\{ e_{67}\},~V_8=\{ e_{78}\},~V_9=\{v_{11} \},~V_{10}=\{ v_{12}\},
\end{array}
\end{equation*}
we have $\chi^{tt}_d (P_{13})=10$. 
%-------------------------------------
%---------------------------------------------------------------------
\item $n=14$. Let $\ell=10$. Then $(a_1,a_2)=(4,6)$, $(5,4)$, $(5,5)$, $(6,2)$, $(6,3)$, $(6,4)$, $(7,1)$,$(7,2)$, $(7,3)$. Because \texttt{Facts} $5-9$ imply $2a_1+a_2\geq 14$, $8\leq a_1+a_2\leq 10$, $4 \leq a_1 \leq 7$  and $\max\{0,14-2a_1,8-a_1\}\leq a_2\leq \min\{6, 10-a_1\}$. Since $(a_1,a_2)=(4,6)$, $(5,5)$, $(6,4)$, $(7,3)$ imply $\sum_{i=1}^{10}|V_i|\neq 2n-1$, and $(a_1,a_2)=(5,4)$,  $(6,3)$, $(7,2)$  imply $|V_1|>\alpha=9$, which contradict \texttt{Fact 1}, we have $(a_1,a_2)=(6,2)$,  $(7,1)$, that is, 
%----------------------------
\begin{equation*}
\begin{array}{llll}
(|V_1|,\cdots,|V_{10}|)&            =   & (9,8,2,2,1,1,1,1,1,1), \\
                                & \mbox{or} & (9,9,2,1,1,1,1,1,1,1).
\end{array}
\end{equation*}  
%--------------------------
If $(|V_1|,\cdots,|V_{10}|) = (9,9,2,1,1,1,1,1,1,1)$, then $V_3\cup \cdots \cup V_{10}$ is a maximal independent set by \texttt{Fact 11}, and so no vertex of the set is totally dominated by a color class by \texttt{Fact 2}.  If $(|V_1|,\cdots,|V_{10}|) = (9,8,2,2,1,1,1,1,1,1)$, then $V_3\cup \cdots \cup V_{10}=\mathcal{W}_j\cup \{w\}$ for some $1\leq j \leq 3$ and some $w\in \bigcup_{j\neq i=1}^3 \mathcal{W}_i$ by \texttt{Fact 11}. Since $deg(w)=4$, there exist at least five vertices in $\mathcal{W}_j$ which are not totally dominated by a color class, a contradiction.

%\textbf{Old:} If $(|V_1|,\cdots,|V_{10}|) = (9,8,2,2,1,1,1,1,1,1)$, $S=V_2\cup \cdots \cup V_{10}$ is a TDC of the induced subgraph $T(P_{14})[S]$ which is a path of order 18. But this is not possible by Proposition \ref{chi_dt P_nn}. Thus $\ell\geq 11$, which implies $\chi_d^{tt}(P_{14})=11=\gamma_{tm}(P_{14})+3 $ by (\ref{chi^{t}_d (T(P_n))  leq gamma_t(T(P_n))+3}).
%-------------------------------------
\item $n=15$. Let $\ell=11$. Then $(a_1,a_2)=(4,7)$, $(5,5)$, $(5,6)$, $(6,3)$, $(6,4)$, $(6,5)$, $(7,2)$, $(7,3)$, $(7,4)$, $(8,1)$, $(8,2)$, $(8,3)$, $(9,0)$, $(9,1)$, $(9,2)$. Because \texttt{Facts} $5-9$ imply $2a_1+a_2\geq 15$, $9\leq a_1+a_2\leq 11$, $4 \leq a_1 \leq 9$ and 
$\max\{0,15-2a_1,9-a_1\}\leq a_2\leq \min\{7, 11-a_1\}$. Since $(a_1,a_2)=(4,7)$, $(5,6)$, $(6,5)$, $(7,4)$ , $(8,3)$, $(9,2)$ imply $\sum_{i=1}^{11}|V_i|\neq 2n-1$ and $(a_1,a_2)=(5,5)$, $(6,4)$, $(7,3)$, $(8,2)$, $(9,1)$  imply $|V_1|>\alpha=10$, which contradict \texttt{Fact 1}, we have $(a_1,a_2)=(6,3)$,  $(7,2)$,  $(8,1)$,  $(9,0)$. %, that is, 
%\begin{equation*}
%\begin{array}{llll}
%(|V_1|,\cdots,|V_{11}|)  &            =   & (10,7,2,2,2,1,1,1,1,1,1), \\
 %                                   & \mbox{or} & (9,8,2,2,2,1,1,1,1,1,1), \\
    %                                & \mbox{or} & (10,8,2,2,1,1,1,1,1,1,1), \\
       %                             & \mbox{or} & (9,9,2,2,1,1,1,1,1,1,1), \\
          %                          & \mbox{or} & (10,9,2,1,1,1,1,1,1,1,1), \\
             %                       & \mbox{or} & (10,10,1,1,1,1,1,1,1,1,1).
%\end{array}
%\right.
%\end{equation*}  
If $(a_1,a_2)=(9,0)$. Then $V_3\cup \cdots \cup V_{11}=\mathcal{W}_3$  is a maximal independent set 
(by \texttt{Fact 11}), and so no vertex of the set is totally dominated by a color class.
%------------------
\begin{itemize}
%--------------------
\item[$\circ$] $(a_1,a_2)=(8,1)$, that is, $(|V_1|,\cdots,|V_{11}|) =(10,9,2,1,1,1,1,1,1,1,1)$. Then either $V_3\cup \cdots \cup V_{11}=\mathcal{W}_2$  which is a maximal independent set (by \texttt{Fact 11}) and so no vertex of the set is totally dominated by a color class, or $V_3\cup \cdots \cup V_{11}=\mathcal{W}_3\cup \{w\}$ for some vertex $w\in \mathcal{W}_1\cup \mathcal{W}_2$, and since $|N(w)\cap \mathcal{W}_3|\leq 2$, there exist at least seven vertices in $V_3\cup \cdots \cup V_{11}$ which are totally dominated by no color class.

\item[$\circ$] $(a_1,a_2)=(7,2)$. Then, since  the number of isolate vertices of $T(P_{15})[\bigcup_{i=5}^{11}V_i]$ is at most 2 (by \texttt{Fact 10}) and so $T(P_{15})[\bigcup_{i=5}^{11}V_i]\cong \overline{K_2}\cup H_{5}$, $K_1\cup \mathcal{H}_6$ or $\mathcal{H}_7$, we have
\begin{equation*}
\begin{array}{lllll}
|\bigcup_{i=3}^{11} CN(V_i)| &\leq & |\bigcup_{i=3}^{4} CN(V_i)|+|\bigcup_{i=5}^{11} CN(V_i)|\\
& \leq & 4+\max\{ 2\times 3+ 7+9, 1\times 3 + 3\times 7, 2\times 7+ 1 \times 9\}\\
& < &|V(T(P_{15}))|,
\end{array}
\end{equation*}
a contradiction with (\ref{CN(V_i)=V}). 
%-------------------------
\item[$\circ$]  $(a_1,a_2)=(6,3)$. Then, since  the number of isolate vertices of $T(P_{15})[\bigcup_{i=6}^{11}V_i]$ is at most 3 (by \texttt{Fact 10}) and so $T(P_{15})[\bigcup_{i=6}^{11}V_i]\cong \overline{K_3}\cup H_{3}$, $\overline{K_2}\cup H_{4}$, $K_1\cup \mathcal{H}_5$ or $\mathcal{H}_6$, we have
\begin{equation*}
\begin{array}{lllll}
|\bigcup_{i=3}^{11} CN(V_i)| &\leq & |\bigcup_{i=3}^{5} CN(V_i)|+|\bigcup_{i=6}^{11} CN(V_i)|\\
& \leq & 6+\max\{ 3\times 3+9, 2\times 3+ 2\times 7, 3 + 7+ 9, 3\times 7\}\\
& < &|V(T(P_{15}))|,
\end{array}
\end{equation*}
a contradiction with (\ref{CN(V_i)=V}). 
%-----------------------------------
\end{itemize}
Thus $\ell\geq 12$, which implies $\chi_d^{tt}(P_{15})=12=\gamma_{tm}(P_{15})+3 $ by (\ref{chi^{t}_d (T(P_n))  leq gamma_t(T(P_n))+3}).
%-------------------------------------
\item $n=16$. Let $\ell=11$.  Then $(a_1,a_2)=(5,6)$,  $(6,4)$, $(6,5)$, $(7,3)$, $(7,4)$, $(8,2)$, $(8,3)$, $(9,1)$, $(9,2)$. Because \texttt{Facts} $5-9$ imply $2a_1+a_2\geq 16$, $10 \leq a_1+a_2\leq 11$, $5 \leq a_1 \leq 9$ and $\max\{0,16-2a_1,10-a_1\}\leq a_2\leq \min\{8, 11-a_1\}$. Since $(a_1,a_2)=(5,6)$, $(6,5)$, $(7,4)$, $(8,3)$ , $(9,2)$ imply $\sum_{i=1}^{11}|V_i|\neq 2n-1$ and $(a_1,a_2)=(6,4)$,  $(7,3)$, $(8,2)$,  $(9,1)$  imply $|V_1|>\alpha=11$, which contradict \texttt{Fact 1}, we have $\ell \geq 12$. Now since $(V_{1},V_{2},\cdots,V_{12})$ is a TDC of $T(P_{16})$ where
\begin{equation*}
\begin{array}{ll}
V_1=\{v_{1}, v_{4}, v_{6},v_{11}, v_{13}, v_{16}, e_{23}, e_{78},e_{9(10)},e_{(14)(15)} \}, \\
V_2=\{v_{5}, v_{7}, v_{10}, v_{12},e_{12},e_{34}, e_{89},e_{(13)(14)},e_{(15)(16)} \}, \\
V_3=\{ v_{2}\},~V_4=\{ v_{3}\},~V_5=\{ e_{45},e_{67}\},~V_6=\{ e_{56}\},~V_7=\{ v_{8}\},\\
V_8=\{ v_{9}\},~V_9=\{e_{(10)(11)},e_{(12)(13)} \},~V_{10}=\{ e_{(11)(12)}\},~V_{11}=\{ v_{14}\},~V_{12}=\{ v_{15}\},
\end{array}
\end{equation*}
we have $\chi^{tt}_d (P_{16})=12$.  
%-------------------------------------
\item $n=17$. Let $\ell=12$. Then $(a_1,a_2)=(5,7)$,  $(6,5)$, $(6,6)$, $(7,3)$, $(7,4)$, $(7,5)$, $(8,2)$,$(8,3)$, $(8,4)$ , $(9,1)$, $(9,2)$, $(9,3)$. Because \texttt{Facts} $5-9$ imply $2a_1+a_2\geq 17$, $10 \leq a_1+a_2\leq 12$, $5 \leq a_1 \leq 9$ and $\max\{0,17-2a_1,10-a_1\}\leq a_2\leq \min\{7, 12-a_1\}$. Since $(a_1,a_2)=(5,7)$, $(6,6)$, $(7,5)$, $(8,4)$, $(9,3)$ imply $\sum_{i=1}^{12}|V_i|\neq 2n-1$ and $(a_1,a_2)=(6,5)$,  $(7,4)$, $(8,3)$,  $(9,2)$  imply $|V_1|>\alpha=11$, which contradict \texttt{Fact 1}, we have  $(a_1,a_2)=(7,3)$,  $(8,2)$,  $(9,1)$. If $(a_1,a_2)=(9,1)$, then $V_3\cup \cdots \cup V_{12}$  is a maximal independent set (by \texttt{Fact 11}) and so no vertex of the set is totally dominated by a color class. Now let $(a_1,a_2)=(8,2)$, that is, $(|V_1|,\cdots,|V_{12}|) =(11,10,2,2,1,1,1,1,1,1,1,1)$. Then $V_3\cup \cdots \cup V_{12}=\mathcal{W}_j\cup \{w\}$ for some $1\leq j \leq 3$ and some  $w\in \bigcup_{j\neq i =1}^3 \mathcal{W}_i$, and since $|N(w)\cap \mathcal{W}_j|\leq 2$, there exist at least nine vertices in $\mathcal{W}_j$ which are totally dominated by no color class. Finally let $(a_1,a_2)=(7,3)$. Then, since  the number of isolate vertices of $T(P_{17})[\bigcup_{i=6}^{12}V_i]$ is at most 3 (by \texttt{Fact 10}) and so $T(P_{17})[\bigcup_{i=6}^{12}V_i]\cong \overline{K_3}\cup H_{4}$, $\overline{K_2}\cup H_{5}$, $K_1\cup \mathcal{H}_6$ or $\mathcal{H}_7$, we have
\begin{equation*}
\begin{array}{lllll}
|\bigcup_{i=3}^{12} CN(V_i)| &\leq & |\bigcup_{i=3}^{5} CN(V_i)|+|\bigcup_{i=6}^{12} CN(V_i)|\\
& \leq & 6+\max\{ 3\times 3+ 2\times 7, 2\times 3+ 7+9, 3 +3\times 7, 2\times 7+9\}\\
& < &|V(T(P_{17}))|,
\end{array}
\end{equation*}
a contradiction with (\ref{CN(V_i)=V}). 
%-----------------------------------
Thus $\ell\geq 13$, which implies $\chi_d^{tt}(P_{17})=13=\gamma_{tm}(P_{17})+3$ by (\ref{chi^{t}_d (T(P_n))  leq gamma_t(T(P_n))+3}).
\end{itemize}
\end{proof}

\section{A Problem}
By comparing the total dominator chromatic numbers of some graphs $G$ such as paths, cycles with their total dominator total chromatic numbers, we see that $ \chi^{tt}_{d}(G)-\chi^{t}_{d}(G) \rightarrow \infty$ when $n\rightarrow \infty$ for them. So, we end our paper with the following important problem. 

\begin{prob}
\label{chi^{t}_{d}(T(G))> chi^{t}_{d}(G)}
Find some real number $r > 1$ such that for any graph $G$, $\chi^{tt}_{d}(G) \geq r \chi^{t}_{d}(G)$.
\end{prob}

%%%%%%%%%%%%%%%%%%%%%%%%%%%%%%%%%%%%%%%%%%%% REFERENCES  %%%%%%%%%%%%%%%%%%%%%%%%%%%%%%%%%%%%%%%%%%%%%%%%%%%%%%%%%%%%%%%%%%%%%%%%5

\end{document}